\definecolor{link}{rgb}{0.18,0.25,0.63}
\definecolor{myred}{rgb}{0.7,0.25,0.2}
\numberwithin{equation}{section}
\DeclareMathOperator*{\supp}{\mathrm{supp}}
\g@addto@macro{\endabstract}{\@setabstract}
\newcommand{\authorfootnotes}{\renewcommand\thefootnote{\@fnsymbol\c@footnote}}%
\begin{document}
\definecolor{link}{rgb}{0,0,0}
\definecolor{mygrey}{rgb}{0.34,0.34,0.34}
\def\blue #1{{\color{blue}#1}}

 \begin{center}
 \large
  \textbf{Variable step mollifiers and applications} \par \bigskip \bigskip
  
   \normalsize
  \textsc{Michael Hinterm\"uller} \textsuperscript{$\dagger\ddagger$}, \textsc{Kostas Papafitsoros} \textsuperscript{$\dagger$} \textsc{and Carlos N. Rautenberg} \textsuperscript{$^*$}
\let\thefootnote\relax\footnote{
\textsuperscript{$\dagger$}Weierstrass Institute for Applied Analysis and Stochastics (WIAS), Mohrenstrasse 39, 10117, Berlin, Germany
}
\let\thefootnote\relax\footnote{
\textsuperscript{$\ddagger$}Institute for Mathematics, Humboldt University of Berlin, Unter den Linden 6, 10099, Berlin, Germany}
\let\thefootnote\relax\footnote{
\textsuperscript{$^*$}Department of Mathematical Sciences, George Mason University, Fairfax, VA 22030, USA.
}

\let\thefootnote\relax\footnote{
\hspace{3.2pt}Emails: \href{mailto:michael.hintermueller@wias-berlin.de}{\nolinkurl{hintermueller@wias-berlin.de}},
 \href{mailto: papafitsoros@wias-berlin.de}{\nolinkurl{papafitsoros@wias-berlin.de}}, \href{mailto:crautenb@gmu.edu}{\nolinkurl{crautenb@gmu.edu}}}

\end{center}

\begin{abstract}
We consider a mollifying operator with variable step  that, in contrast to the standard mollification, is able to preserve the boundary values of functions. We prove boundedness of the  operator in all basic Lebesgue, Sobolev and BV spaces as well as corresponding approximation results. The results are then applied to extend recently developed theory concerning the density of convex intersections.
\end{abstract}

%\tableofcontents

\definecolor{link}{rgb}{0.18,0.25,0.63}

%\vspace{2cm}

\section{Introduction}

Consider an open and bounded domain $\om\subseteq \RR^{N}$, and bounded non-negative smooth maps $\rho,\eta:\RR^{N}\to \RR$ such that $\mathrm{supp }(\rho) \subseteq \overline{B_1(0)}:=\{x\in\RR^{N}: |x|\leq 1\}$ with ``$\mathrm{supp }$'' the support set of a function, and $\eta(x)\leq \mathrm{dist}(x,\partial \Omega)$ for all $x\in \Omega$. Then, we define the {\it variable step mollifier} $T$ as follows:
\newtheorem{def_T}{Definition}[section]
\begin{def_T}\label{lbl:def_T}
Let $f\in L_{loc}^{1}(\om)$. We define $Tf(x)$ for $x\in\om$, as
\begin{align}\label{lbl:def_T2}
Tf(x):=M_{\rho}\int_{B_1(0)} \rho (z ) f(x-\eta(x)z)\,dz, & & \text{with} & &M_{\rho}:=\left(\int_{B_1(0)}\rho(y)\,dy\right)^{-1},
\end{align}
with ${B_1(0)}:=\{x\in\RR^{N}: |x| < 1\}$.
\end{def_T}
%\begin{equation}
%\begin{aligned}
%Tf(x)&=C(x)\int_{\om} \rho \left (\frac{x-y}{\eta(x)} \right )f(y)\,dy,\label{intro:T}\\
%C(x)&=\left (\int_{\om} \rho \left (\frac{x-y}{\eta(x)} \right )\,dy \right )^{-1}.
%\end{aligned}
%\end{equation}
Note that if $\eta=1$ and if $\rho$ is a smooth function, then \eqref{lbl:def_T2} reduces to the usual mollifier. In this paper, however, we are interested in $\eta(x)\leq \mathrm{dist}(x,\partial \Omega)$ for all $x\in \Omega$, which implies that the domain of integration in the definition of $T$ is always within $\Omega$.

To the best of our knowledge, the idea of such mollifiers with variable step was introduced by Burenkov; see \cite{MR684996, MR1622690} and references therein.  In particular, Burenkov considers mollifiers defined as infinite sums of constant step mollifiers that are localized by means of the partition of identity procedure. Further, the operator in Definition \ref{lbl:def_T} is mentioned on Remark 26 in \cite{MR1622690} and only a few comments are made. A detailed study of such operators, however, is still lacking in the literature. In particular, we focus on identifying conditions on the pair $(\rho, \eta)$ to guarantee boundedness of the operator on Lebesgue, Sobolev, and {bounded variation (BV)} spaces, and the preservation of trace values, respectively. Additionally, we study properties of approximations $T_n$ (where $\eta$ is replaced by $\eta/n$, $n\in\mathbb{N}$) of $T$. Finally, we apply our work to establish results involving the density of convex intersections. For more on applications we refer to \cite{Simon1}, which are partly recalled at the end of this section, for convenience.

In general, $\rho$ denotes a standard smooth function, supported on the unit ball centred at the origin, and
% with the additional property that it is equal to one on a ball of radius $1/2$. 
$\eta$ is a non-negative, smooth function in $\om$, with the property that its values and all its derivatives, vanish on the boundary of $\om$. The existence of such a function follows from a classical result of Whitney \cite{whitney1934analytic}; see also Theorem \ref{lbl:eta_existence}. Moreover, the function $M_\rho$ helps to normalize the convolution. The operator $T$ can then be thought of as a mollification process with a mollification step encoded here by the smooth function $\eta$ that satisfies $\eta(x)\leq \mathrm{dist}(x,\partial \Omega)$ for all $x\in \Omega$. For $\eta>0$, one indeed finds that the resulting function $Tf$ is smooth. As a result, the operator $T$ is able to preserve the boundary values of $f$, when regarded in the appropriate sense depending on the regularity of $f$. Note that this is in contrast to the standard mollification procedure, where such a boundary behavior cannot be guaranteed in general. 
%We note that as far as our knowledge extends, the only work relevant to ours in the one by Burenkov in \cite{MR1622690}. 
In the work by Burenkov \cite{MR1622690}, by using appropriate partitions of unity the approximation by smooth functions while preserving boundary values can also be  achieved. We point out, however, that in our case the definition of the mollifying operator has a more explicit form and is thus easier to handle. This makes the mollifier more suitable for our main applications as discussed below.

%By replacing the function $\eta$ with $\tfrac{1}{n}\eta$ in the \eqref{intro:T}, we obtain a sequence of operators $T_{n}$. We show that $T_{n}f\to f$ as $n\to\infty$, in an appropriate sense depending on the regularity of $f$.

\subsection*{Applications}\label{Motivation} %We now provide an optimization framework in which the operator $T$ plays a role of utmost importance, and for which there are no other known approaches to the best of our knowledge. 
Let $X$ be a reflexive Banach space of (equivalence classes) of maps $u:\Omega\to \mathbb{R}$ where $\Omega\subset \mathbb{R}^N$ is a domain with Lipschitz boundary, and such that strong convergence of a sequence in $X$ implies the existence of a subsequence converging pointwise, e.g., $X\in \{L^2(\Omega), H_0^{1}(\Omega),\ldots\}$. Let $F:X\to \mathbb{R}$ be continuous, coercive and sequentially weakly lower semicontinuous, and $K$ be defined as
\begin{equation}\label{Kalpha}
K:=\{w\in X: |w(x)|\leq \alpha(x) \text{ for almost every (a.e.) } x\in\Omega\},
\end{equation}
where the {bound} $\alpha$ satisfies
\begin{equation}\label{eq:alpha}
\alpha\in C(\Omega): \quad \alpha(x)>0,\: \text{ for }\: x\in \Omega, \quad\text{and}\quad \alpha(x_n)\to 0, \:\text{ for }\:x_n\to x\in \Gamma\subseteq \partial \Omega,
\end{equation}
for some non-empty subset $\Gamma$. Hence, there is no $\epsilon>0$ such that $|\alpha(x)|\geq \epsilon$ for all $x\in \Omega$. We consider the optimization problem
\begin{equation}\label{eq:OrigP}\tag{$\mathrm{P}$}
\min_{u\in X} F(u), \quad \text{subject to }\quad u\in K,
\end{equation}
which has a solution given the properties of $X$, $F$, and the convex nature of $K$. Such optimization problems, where the bound is spatially {dependent} occur in many places including recent work in mathematical imaging and other optimization problems with partial differential equations (PDE) constraints; see, e.g., \cite{Simon2,hint_rau,hintermuller2017analytical,hintermuller2017optimal,hintermuller2006path, hintermuller2018function, Antil_2018,Antil_2019,Ceretani_2018} and references therein. 

For the development of efficient infinite dimensional algorithms, and finite dimensional approximation (discretization) methods, it is common to regularize the constraint $K$ via the so-called Moreau-Yosida regularization, and replace \eqref{eq:OrigP} by a sequence of problems of the type
\begin{equation}\label{eq:AproxP}\tag{$\mathrm{P}_n$}
\min_{u\in X} F_n(u):= F(u) +\frac{\gamma_n}{2}\left(\inf_{w\in K}\|u-w\|_X\right)+G_n(u),
\end{equation}
where $\gamma_n\to\infty$, and $G_n$ has, in general, the following property: For some dense subspace $Y$ of $X$, we have that $G_n(w)=+\infty$ if $w\notin Y$. There are two important examples fitting into this framework:

\textbf{a. Tikhonov regularization}. Let $Y$ be a dense subspace  of $X$ consisting of more regular functions than those in $X$. An example is $Y=H_0^1(\Omega)$ and $X=L^2(\Omega)$.  Consider
\begin{equation*}
G_n(w):=\frac{1}{\delta_n}\|w\|_Y^p,
\end{equation*}
where $\delta_n\to\infty$, and $p\geq 1$. Note that in this setting the solution to \eqref{eq:AproxP} approximating the solution of \eqref{eq:OrigP} exhibits higher regularity. Such a regularity gap is sometimes required for utilizing efficient solvers like semismooth Newton methods.

\textbf{b. Conforming discretization}. For each $n\in\mathbb{N}$, let $X_n$ be a finite dimensional subspace of $X$ such that $X_n\subseteq X_{n+1}$ and that the $X$ closure of $Y:=\cup_{n\in\mathbb{N}}X_n$ equals $X$. Then, we set
\begin{equation*}
G_n(w):=\mathcal{I}_{X_n}(w),
\end{equation*}
the indicator function of $X_n$, i.e., $G_n(w)=0$ if $w\in X_n$ and $+\infty$ otherwise. Observe that this is a common setting for conforming finite element methods when discretizing the original problem. For example, if $X=H^1(\Omega)$, then $X_n$ can be taken to be the space of globally continuous, piecewise affine functions on a given mesh.
Utilizing a hierarchical (geometric) mesh refinement one obtains nested spaces satisfying
$X_n\subset X_{n+1}$, $n\in\mathbb{N}$.

The relation of a sequence $(u_n)_{n\in\NN}$ where $u_n$ solves \eqref{eq:AproxP} to solutions of  \eqref{eq:OrigP} is highly related to the $\Gamma$-convergence of $F_n$ to $F$. In particular, if $\Gamma$--$\lim F_n=F$ in the strong (weak) topology, then strong (weak) accumulation points of $(u_n)_{n\in\NN}$ solve \eqref{eq:OrigP}. The $\Gamma$-convergence is in turn closely related to the following density property:
\begin{equation}\label{Density}
\overline{K\cap Y}^X=K.
\end{equation}
Specifically, if \eqref{Density} holds, then $\Gamma$--$\lim F_n=F$ in the weak and strong topologies. On the other hand, if \eqref{Density} does not hold, then for any $u^*\in K\setminus \overline{K\cap Y}^X$, there are subsequences $(\gamma_{k_{n}})_{n\in\NN}$ (or $(\gamma_{k_{n}})_{n\in\NN}, (\delta_{k_{n}})_{n\in\NN}$ when relating to our example of Tikhonov regularization) for which $F_{k_{n}}(u_{k_{n}})\not\to F(u^*)$, as $n\to\infty$,  for any sequence $(u_n)_{n\in\NN}$ such that $u_{k_{n}}\to u^*$.

We emphasize that density results of the type \eqref{Density} can not be inferred, in general, from the dense embedding $Y\hookrightarrow X$; see \cite{MR3306389} for a striking counterexample. In \cite{MR3306389, Simon1} such dense embeddings of convex intersections have been obtained, but only for non-vanishing bounds or under additional regularity assumptions on the bound. In this work we close several gaps in the literature. Most notably, in Section \ref{sec:density} we show that the variable step mollifier $T$ is the right tool to establish density results for possibly vanishing bounds $\alpha$.

%We show that our operator $T$ is bounded in all basic $L^{p}(\om)$ and $W^{1,p}(\om)$ spaces as well as in $\bv(\om)$. We note that the case of $L^{1}(\om)$, $W^{1,1}(\om)$ and $\bv(\om)$ is rather special as similar issues as the ones that appear in singular operator theory are also present.  In fact, the boundedness of $T$ in $L^{1}(\om)$ -- and consequently in $W^{1,1}(\om)$ and $\bv(\om)$  -- is quite non trivial and in order to do so, we must assume some additional assumptions on the function $\eta$ which however do not affect any of the other results. 

\subsection*{Summary of the results and organisation of the paper}

The paper is organised as follows.

 In Section \ref{sec:setup}, we fix the pair $(\rho, \eta)$ explicitly,  establish basic results for the operator $T$, and introduce the approximations $T_n$. In particular we show that for a function $f\in L_{loc}^{1}(\om)$, $Tf$ is smooth at $x\in\om$, where $\eta(x)>0$. In this section, we also {provide a first insight} into the mollification effect by $T$, by considering its action on continuous functions. Among others, we show that if $f\in C(\overline{\om})$, then $Tf=f$ at the points where $\eta$ vanishes and $T_{n}f\to f$ uniformly.

In Section \ref{sec:approximationLp} we provide basic boundedness and approximation results for  $L^{p}$ functions, $1<p\le \infty$. With the help of the Marcinkiewicz interpolation theorem we show that the operator $T$ is bounded from $L^{p}(\om)$ to $L^{p}(\om)$ for every $1<p\le \infty$. We also show the corresponding approximation result, i.e., $T_{n}f\to f$ in $L^{p}(\om)$ for every $f\in L^{p}(\om)$, $1<p\le \infty$.
 
 Analogous boundedness and approximation results for $W^{1,p}$ functions, $1<p\le \infty$ are the subjects of Section \ref{sec:approximationW1p}, where also a formula for the weak gradient of $Tf$ is derived. We also show that the operator preserves {the trace of $W^{1,p}$ functions }in the appropriate $W^{1-1/p,p}(\partial \om)$ sense. We put particular emphasis on the case where the function $\eta$ vanishes on a set inside $\om$, proving that the function $Tf$ has still a global weak derivative and all the corresponding results holds in that case, too.
 
 The case $p=1$, which covers $L^{1}(\om)$, $W^{1,1}(\om)$ and $\bv(\om)$ spaces, is treated separately in Section \ref{Sec:L1} since it is rather special. Indeed, issues similar to the ones appearing for the Hardy-Littlewood maximal operator become relevant.  In fact, the boundedness of $T$ in $L^{1}(\om)$ -- and consequently in $W^{1,1}(\om)$ and $\bv(\om)$  -- is rather involved to prove. In order to do so, we must invoke additional assumptions on the function $\eta$ which however do not affect any of the other results of this work.
 
Finally,  in Section \ref{sec:density} we focus on applications of our theoretical results. In particular, in Theorem \ref{thm:density} we prove the main density results of the type \eqref{Density} for possible vanishing obstacle functions $\alpha$.

\section{The set up and basic definitions}\label{sec:setup}

As stated before, we consider that $\om\subseteq \RR^{N}$ is an open, bounded domain, and, in general, we do not assume any regularity of the boundary $\partial \om$, unless explicitly mentioned. In this section we define the function $\rho$ that is used throughout the paper, and the basic map $\eta$ used either directly or to construct other maps. The pair $(\rho,\eta)$ defines uniequivocally the map $T$.

\subsection{Construction of $\rho$ and $\eta$}

%Indeed, if $x\in\om$, then there exists $\xi\in \partial \om$ such that $|x-\xi|=\sigma(x)$. In particular, this means that $\{\lambda x+(1-\lambda) \xi:\; \lambda\in [0,1)\}\subseteq \om$. By possibly rescaling $\eta$ we can assume that $|\nabla \eta(x)|\le 1/2$ for all $x\in\om$. By an application of the mean value theorem for $\eta$ we have that for some $z\in[x,\xi]$ 
%\begin{align*}
%\eta(x)=|\eta(x)-\eta(\xi)|\le |\nabla \eta (z)||x-\xi|< \sigma(x).
%\end{align*}
%Unless specified otherwise, the smooth function $\eta$ will satisfy the properties of Theorem \ref{lbl:eta_existence} as well as \eqref{eta_less_dist}.

We denote by $\rho$ a function with the following properties:
\begin{enumerate}
\item $\rho\in C_{c}^{\infty}(\RR^{N})$, $0\le \rho \le1$ with $\rho(x)=0 \iff|x|\ge 1$;% and $\mathrm{supp}(\rho)=\{x\in\RR^{N}:\;|x|\le 1\}$.  
\item $\rho$ is radially symmetric.
\item $\rho(x)\ge \rho(1/2)$ for every $x$ with $|x|<1/2$.% $\rho(x)=1\iff |x|\le 1/2$.
%\item $\int_{\RR^{N}} \rho(x) \,dx=1$. \com{(we don't really need this one, do we?)}
\end{enumerate}
Note that, by slighly abusing the notation, above we use $\rho(1/2)=\rho(x)$ for any $x$ with $|x|=1/2$. Notice that the properties $\mathrm{(i)}$--$\mathrm{(iii)}$ above are satisfied, for instance, by the standard mollifying function
\begin{equation*}
\rho(x)=\left\{
  \begin{array}{ll}
    e^{-\frac{1}{1-|x|^2}}, & \hbox{ $|x|\leq 1$,} \\
    0, & \hbox{ $|x|>1$.} 
  \end{array}
\right.
\end{equation*}
Note that we do not scale $\rho$ to have unitary integral, as it is usually done when defining mollifiers.

The following result by Whitney \cite{whitney1934analytic} is of central importance to this work, and for the sake of completeness we provide here a proof. It states the existence of a non-negative smooth function vanishing on arbitrary closed sets and provides a main tool for defining $\eta$.

\newtheorem{eta_existence}{Theorem}[section]
\begin{eta_existence}[Whitney]\label{lbl:eta_existence}
For any closed set $\Theta\subseteq \RR^{N}$ there exists a non-negative function $\eta\in C^{\infty}(\RR^{N})$ such that 
\begin{enumerate}
\item $\Theta=\eta^{-1}(\{0\})$.
\item All derivatives of $\eta$ vanish on $\Theta$.
\item $\eta(x)< \mathrm{dist}(x,\Theta)$ for every {$x\in \RR^{N}\setminus \Theta$}. 
\end{enumerate}
%Moreover all the derivatives of $\eta$ vanish on $K$ as well.
\end{eta_existence}
\begin{proof}
 Note first that the open set $\RR^{N}\setminus \Theta$ can be written as a countable union of open balls $B_{i}$ each of which is compactly supported in $\RR^{N}\setminus \Theta$, i.e.,
\[\RR^{N}\setminus \Theta=\bigcup_{i=1}^{\infty} B_{i},\quad \text{where } B_{i}\Subset \RR^{N}\setminus \Theta\quad \text{for all }i\in\NN.\]
For every $B_{i}$ we associate a non-negative smooth function $\eta_{i}$ such that $\eta_{i}>0$ on $B_{i}$  and zero on the boundary of $B_i$. After an appropriate scaling, we may also assume that all partial derivatives of $\eta_{i}$ {up to order $i$} are uniformly bounded by $\frac{1}{\sqrt{N}}2^{-(i+1)}$. Define now
\[\eta(x)=\sum_{i=1}^{\infty} \eta_{i}(x),\quad x\in \RR^{N}.\]
Denoting by $\partial^{n}$ any partial derivative of order $n\ge 0$, we have that 
\[\left | \sum_{i=1}^{\infty}\partial^{n} \eta_{i}(x) \right |\le \left |\sum_{i=1}^{n-1}\partial ^{n} \eta_{i}(x) \right| +\left |\sum_{i=n}^{\infty}\partial^{n} \eta_{i}(x) \right |\le  \left |\sum_{i=1}^{n-1}\partial^{n}\eta_{i}(x) \right|+\frac{1}{\sqrt{N}}\sum_{i=n}^{\infty} 2^{-(i+1)},\]
and thus from the Weierstrass M-test the above sum converges uniformly on $\RR^{N}\setminus \Theta$. Hence,  the function $\eta$ is smooth and for every $n\in\NN$ we have $\partial^{n}\eta(x)=\sum_{i=1}^{\infty} \partial^{n}\eta_{i}(x),$ for  $x\in \RR^{N}$. In particular, since every $\eta_{i}\in C_{c}^{\infty}(\RR^{N}\setminus \Theta)$ for every $\xi\in \Theta$ we have that $\partial^{n}\eta(\xi)=0$, $n\ge 0$. From construction it obviously follows that $\eta>0$ on $\RR^{N}\setminus \Theta$. We have thus shown $(i)$ and $(ii)$. 

In order to show $(iii)$ notice first that $|\nabla \eta (x)|\le 1/2$ for every $x\in\RR^{N}$. Indeed, that follows from the fact that for any $j=1,\ldots,N$ we have
\[\left |\partial_{j}\eta(x)\right |=\left | \sum_{i=1}^{\infty} \partial_{j} \eta_i(x) \right |\le \frac{1}{\sqrt{N}}\sum_{i=1}^{\infty} 2^{-(i+1)}=\frac{1}{2\sqrt{N}},\]
where we use $\partial_j:=\partial/\partial x_j$. Since $\Theta$ is closed, we have that for every {$x\in \RR^{N}\setminus \Theta$}, there exists $\xi\in \Theta$ such that $|x-\xi|=\mathrm{dist}(x,\Theta)$. By an application of the mean value theorem for $\eta$ we have that for some $z\in[x,\xi]$, i.e., on the line segment joining $x$ and $\xi$,
\begin{align*}
\eta(x)=|\eta(x)-\eta(\xi)|\le |\nabla \eta (z)||x-\xi|< \mathrm{dist}(x,\Theta),
\end{align*}
where we used the fact that $|\nabla \eta(z)|\le 1/2$. Hence, the proof is complete.
\end{proof}

We now fix a smooth function $\eta$ as in Theorem \ref{lbl:eta_existence}, where we take
\begin{equation*}
\Theta=\partial \om, \qquad\text{or}\qquad \Theta=\partial\om \cup \Delta,
\end{equation*}
for some set $\Delta$ in $\overline{\Omega}$ such that $\Theta$ is closed. While $\partial\Omega$ is used to enforce the domain of integration of $T$ within $\Omega$, the set $\Delta$ is useful when dealing with sets $K$ as in \eqref{Kalpha} where $\alpha$ vanishes within $\Omega$. 
 For the sake of brevity, we denote by $\sigma$ the distance function to the boundary of $\om$, i.e., for every $x\in\om$, 
\[\sigma(x)=\mathrm{dist}(x,\partial\Omega)=\min\left\{ |x-\xi|:\; \xi\in \partial \om\right\}.\]
 Since $\mathrm{dist}(x,\Theta)\le  \mathrm{dist}(x,\partial \om)$, from $(iii)$ of Theorem \ref{lbl:eta_existence} we get
 %we have since $\eta(x)< \mathrm{dist}(x,\Theta)$ with $\Theta=\partial\Omega\cup\Delta$ for every $x\in %\Omega\setminus\Delta$, then
\begin{equation}\label{eta_less_dist}
\eta(x)< \sigma(x),\quad \text{for every }x\in\om.
\end{equation}
Finally, we denote the support of a function $f:\Omega\to \mathbb{R}$ as $\mathrm{supp} (f)$, i.e.,
\begin{equation*}
\mathrm{supp} (f):=\overline{\{x\in\Omega:f(x)\neq0\}}.
\end{equation*}

%We also set
%\[M_{\rho}:= \left (\int_{\RR^{N}} \rho(x)\,dx\right )^{-1}.\]
%
%We are now ready to define our operator:
%
%%\newtheorem{def_T}[eta_existence]{Definition}
%%\begin{def_T}\label{lbl:def_T}
%%Let $f\in L_{loc}^{1}(\om)$. We define the function $Tf:\om\to\RR$ as
%%\begin{equation}\label{Tf_def}
%%Tf(x):=C(x)\int_{\om} \rho\left (\frac{x-y}{\eta(x)} \right ) f(y)\,dy,
%%\end{equation}
%%for $x\in\om$, with
%%\begin{equation}\label{C_def}
%%C(x):=\left (\int_{\om}  \rho\left (\frac{x-y}{\eta(x)} \right )dy\right )^{-1}.
%%\end{equation}
%%\end{def_T}
%Note that after a change of variables we get and
%\begin{align*}
%Tf(x):=M_{\rho}\int_{B_1(0)} \rho (z ) f(x-\eta(x)z)\,dz, & & \text{and} & &C(x)=\frac{M_{\rho}}{\eta(x)^{N}}
%\end{align*}

%The main difficulty associated to $Tf$ is its behaviour near the boundary. Note that $C(x)\to \infty$ as $x$ approaches $\partial \om$ with growth order depending on $\eta$ and $\rho$. The quantification of such singular behaviour is of utmost importance for the determination of $Tf$ up to the boundary. \com{(this sounds a bit strange)}

\subsection{Basic properties of $T$ and $T_n$} We now provide some elementary properties of $T$ and the sequence $(T_n)_{n\in\NN}$ (where $\eta$ is replaced by $\eta/n$) which subsequently help to show that they approximate the identity in a particular sense. A few  remarks about Definition \ref{lbl:def_T} are in order. In particular note the following: If $\eta(x)=0$, then $Tf(x)=f(x)$, and if $\eta(x)>0$ then
\begin{align}\label{intro:T}
Tf(x)=C(x)\int_{\om} \rho \left (\frac{x-y}{\eta(x)} \right )f(y)\,dy,
\end{align}
with
\begin{equation}
C(x):=\left (\int_{\om} \rho \left (\frac{x-y}{\eta(x)} \right )\,dy \right )^{-1}=\frac{M_\rho}{\eta(x)^N}.
\end{equation}
Notice first that for every $x\in\om$ such that $\eta(x)>0$, the map $y\mapsto \rho((x-y)/\eta(x))$ is smooth in $\om$ and, in view of \eqref{eta_less_dist}, is compactly supported in $\om$. In fact the domain of integration in \eqref{intro:T} can be equivalently taken to be $B_{\eta(x)}(x)$ instead of all of $\om$. Hence the operator $T$ is indeed well defined on $L_{loc}^{1}(\om)$. It is also straightforward to see that $Tf$ is infinitely differentiable at points $x\in\om$ with $\eta(x)>0$, and that if $\Theta=\partial\Omega$ then  $Tf\in C^{\infty}(\om)$. In order to see for example that $Tf$ is infinitely differentiable at a point $x_{0}\in\om$ with $\eta(x_0)>0$, consider the map $x\mapsto \rho  ((x-y)/\eta(x) )$ restricted to a small ball $B_{\delta}(x_{0})\Subset \om$, with $y$ fixed. This map is smooth in $B_{\delta}(x_{0})$ with all its derivatives being bounded there. From property \eqref{eta_less_dist} we have that
\[D:=\bigcup_{x\in B_{\delta}(x_{0})} B_{\eta(x)}(x) \Subset \om,\]
and thus
\[F(x):=(C(x))^{-1}Tf(x)=\int_{D} \rho\left (\frac{x-y}{\eta(x)} \right )f(y)\,dy,\quad x\in B_{\delta}(x_{0}).\]
In order to perform differentiation under the integral on the right hand side in the definition of $F$, it suffices to check that the $k$-th order partial derivatives of the map $x\mapsto \rho \left (\frac{x-y}{\eta(x)} \right )$ are uniformly bounded in $y$ by a constant which depends only on $k$. This follows from the fact that $\rho$ and $\eta$ are smooth, $\eta$ is bounded away from zero in $B_{\delta}(x_{0})$, and from the fact that the $k$-th order partial derivatives of $x\mapsto \rho \left (\frac{x-y}{\eta(x)} \right )$ are sums of products of terms which are one of the following, respectively:
\begin{itemize}
\item Partial derivatives of $\rho$ up to order $k$, evaluated at $\frac{x-y}{\rho(x)}$.
\item Partial derivatives of $\eta$ up to order $k$ evaluated at $x$.
\item Terms of the type $\frac{1}{\eta(x)^{n}}$, $1<n\le k$.
\item Products of the type $\prod_{i=1}^{k} (x_{N_{i}}-y_{N_{i}})$.
\end{itemize}
We refer also to Appendix \ref{sec:app} for more explicit statements concerning the first-order derivatives.

We now define the following sequence of approximations of $f\in L_{loc}^{1}(\om)$:

\newtheorem{Definition_T}[eta_existence]{Definition}
\begin{Definition_T}
Let $f\in L_{loc}^{1}(\om)$. We define $T_nf(x)$ for $x\in\om$ and $n\in\mathbb{N}$, as
\begin{align}\label{lbl:def_Tn}
T_nf(x):=M_{\rho}\int_{B_1(0)} \rho (z ) f\left(x-\frac{\eta(x)}{n}z\right)\,dz, & & \text{with} & &M_{\rho}:=\left(\int_{B_1(0)}\rho(y)\,dy\right)^{-1}.
\end{align}
\end{Definition_T}

Note in particular that if $\eta(x)>0$, then
\begin{equation}\label{Tnf_def}
T_{n}f(x):=C_{n}(x) \int_{\om} \rho\left (\frac{x-y}{\frac{1}{n}\eta(x)} \right )f(y)\,dy,\quad \text{with}\quad C_{n}(x):=M_{\rho}\frac{n^{N}}{\eta(x)^{N}}, \quad \text{for all } n\in\NN.
\end{equation}
%where 
%\[C_{n}(x):=\left (\int_{\om}  \rho\left (\frac{x-y}{\frac{1}{n}\eta(x)} \right )dy\right )^{-1}=M_{\rho}\frac{n^{N}}{\eta(x)^{N}}.\]

%\subsection{Continuous functions}
In order to have a first insight into the difference between the smoothing procedure of the operator $T$ and the one of a standard mollification, we provide the following proposition which states that the operator preserves the boundary values of a function in $C(\overline{\om})$.

\newtheorem{C_boundary_preservation}[eta_existence]{Proposition}
\begin{C_boundary_preservation}\label{lbl:C_boundary_preservation}
If $f\in C(\overline{\om})$, then  $Tf\in C(\overline{\om})$, $Tf=f$ in $\partial \om \cup \Delta$, and $T_{n}f\to f$ uniformly as $n\to\infty$. Further, if $f\in C(\Omega)$ and $\Gamma\subseteq \partial\Omega$ is a closed set such that $\mathrm{supp}(f)\cap \Gamma=\emptyset$, then also $\mathrm{supp}(Tf)\cap \Gamma=\emptyset$.
\end{C_boundary_preservation}

\begin{proof}
%Let $\epsilon>0$. From the uniform continuity of $f$ we can find  $K_{1}\Subset \om$ such that $|f(x)-f(y)|\le \frac{\epsilon}{2}$ for every $x,y\in \overline{\om}\setminus K_{1}$. From the uniform continuity of $f$, there exists a $\delta>0$ such that whenever $|x-y|<\delta$ we get $|f(x)-f(y)|<\frac{\epsilon}{2}$. Moreover let $K_{2}\Subset \om$ such $\eta(x)<\delta$ for every $x\in \om\setminus K_{2}$. Then $K:= K_{1}\cup K_{2} \Subset \om$ and for every $x\in \om\setminus K$ we have
Let $f\in C(\overline{\Omega})$, $\xi\in \Theta=\partial\om\cup \Delta$, $(x_{n})_{n\in\NN}\subseteq \om\setminus \Theta$ with $x_{n}\to \xi$. We will show that $Tf(x_{n})\to f(\xi)$. Since $\eta(x_n)>0$, we have
\begin{align*}
|Tf(x_{n})-f(\xi)|&=\left |C(x_{n})\int_{\om} \rho\left (\frac{x_{n}-y}{\eta(x_{n})} \right )\left (f(y)-f(\xi) \right )\,dy \right |\\
&\le \sup_{y\in B_{\eta(x_{n})}(x_{n})} |f(y)-f(\xi)| \underbrace{C(x_{n})\int_{\om} \rho\left (\frac{x_{n}-y}{\eta(x_{n})} \right )\,dy}_{=1}\\
&=\sup_{y\in B_{\eta(x_{n})}(x_{n})} |f(y)-f(\xi)|\\
&\le |f(x_{n})-f(\xi)|+\sup_{y\in B_{\eta(x_{n})}(x_{n})} |f(y)-f(x_{n})|\to 0,
\end{align*}
where we used the fact that $\eta(x_{n})\to 0$ as well as the uniform continuity of $f$. Thus, this proves that $Tf\in C(\overline{\om})$ and $Tf=f$ in $\partial \om \cup \Delta$.

In order to show the uniform convergence, note that if $x\in \Theta$, then $T_nf(x)=f(x)$, and for every $x\in\om\setminus \Theta$  we have
\begin{align*}
\left | T_{n}f(x)-f(x)\right |
&\le C_{n}(x)\int_{\om} \rho\left (\frac{x-y}{\frac{1}{n}\eta(x)} \right )|f(y)-f(x)|\,dy\\
&\le \sup_{y\in B_{\frac{1}{n}\eta(x)}(x)} |f(y)-f(x)|\to 0 \quad \text{as }n\to \infty,
\end{align*}
where the last quantity is going to zero independently of $x$, due to uniform continuity of $f$.

 Let $f\in C(\Omega)$, $\mathrm{supp}(f)\cap \Gamma=\emptyset$ and $\Gamma\subseteq\partial\Omega$  where $\Gamma$ is closed.  Then, there exists $\epsilon>0$ such that if $d(x,\Gamma)<\epsilon$, then $f(x)=0$. Also, since $\eta$ is smooth, $\eta(\Gamma)=0$ and also all the derivatives of $\eta$ vanish on $\Gamma$,  there exists $\epsilon'>0$, such that if $d(x,\Gamma)<\epsilon'$ we have $B_{\eta(x)}(x)\subseteq \{x:d(x,\Gamma)<\epsilon\}$. Then, if $\eta(x)>0$, $Tf$ is equivalently defined as
\begin{align}
Tf(x)=C(x)\int_{B_{\eta(x)}(x)} \rho \left (\frac{x-y}{\eta(x)} \right )f(y)\,dy.
\end{align}
We further have that $Tf(x)=0$ if $d(x,\Gamma)<\epsilon'$. Also, if $\eta(x)=0$, then $Tf(x)=f(x)=0$ if $d(x,\Gamma)<\epsilon$, i.e., $\mathrm{supp}(Tf)\cap \Gamma=\emptyset$.
\end{proof}

In particular note that the above results imply  that the space $C_c(\Omega)$ is $T$-invariant, i.e., $T(C_c(\Omega))\subseteq C_c(\Omega)$. In fact, $f\in C_c(\Omega)$ if and only if $f\in C(\Omega)$ and there exists an $\epsilon>0$ such that if $d(x,\partial \Omega)<\epsilon$ we have $f(x)=0$.

%Moreover for continuous functions $f$ the approximation is uniform:
%\newtheorem{convergence_uniform}[eta_existence]{Proposition}
%\begin{convergence_uniform}\label{lbl:conv_uniform}
%Let $f\in C(\overline{\om})$. Then $T_{n}f\to f$ uniformly as $n\to\infty$.
%\end{convergence_uniform}
%\begin{proof}
%For every $x\in\om$  we have
%\begin{align*}
%\left | T_{n}f(x)-f(x)\right |
%&\le C_{n}(x)\int_{\om} \rho\left (\frac{x-y}{\frac{1}{n}\eta(x)} \right )|f(y)-f(x)|\,dy\\
%&\le \sup_{y\in B_{\frac{1}{n}\eta(x)}(x)} |f(y)-f(x)|\to 0 \quad \text{as }n\to \infty,
%\end{align*}
%where the last quantity is going to zero independently of $x$, from the uniform continuity of $f$.
%
%\end{proof}

\section{Approximation results in $L^{p}$ spaces}\label{sec:approximationLp}

%\subsection{$\boldsymbol{L^{p}(\om)}$ spaces, $\boldsymbol{1<p\le \infty}$}
It will be convenient for this section as well as for the following ones  to denote by $\mathscr{L}(X)$ the space of bounded, linear operators from $X$ to $X$, where $(X, \|\cdot\|{\color{red}_{X}})$ is a Banach space.

We are now interested in investigating how the operator $T$ acts on $L^{p}(\om)$ functions. Throughout this section we consider the case $1<p\le \infty$. The case $p=1$ has some extra complications and is treated separately in Section \ref{Sec:L1} below, under additional assumptions on $\eta$. 
Our aim is to show that the operator $T$ belongs to $\mathscr{L}(L^{p}(\om))$ for $1<p\le \infty$. In order to do so, we  employ the Marcinkiewicz interpolation theorem; see for instance \cite[Chapter VIII, Theorem 9.1]{dibenedetto2002real}.

For this matter, we will show that $T$ belongs to $\mathscr{L}(L^{\infty}(\om))$ and it is also bounded from $L^{1}(\om)$ to $L^{1,w}(\om)$, i.e.,
\begin{align}
\|Tf\|_{L^{\infty}(\om)}\le \|f\|_{L^{\infty}(\om)},\quad \text{for every }f\in L^{\infty}(\om),\label{interp_Linf}\\
\left | \left \{x\in \Omega:\; |Tf(x)|>\lambda \right \} \right |\le  \frac{M_{1}}{\lambda}\|f\|_{L^{1}(\om)},\quad \text{for every }f\in L^{1}(\om),\label{interp_wL1}
\end{align}
for an appropriate constant $M_{1}$ that does not depend on $f$.  In particular, the formulation of the  Marcinkiewicz interpolation theorem associated to the above inequalities entails the following:

\newtheorem*{Marc}{Theorem}
\begin{Marc}[Marcinkiewicz]\label{marc}
	Let $T$ be a linear map defined on $L^1(\Omega)$ and $L^\infty(\Omega)$, and suppose that it satisfies \eqref{interp_Linf} and \eqref{interp_wL1}. Then, $T$ is a bounded linear operator in $L^p(\Omega)$ for any $1<p<\infty$, and 
	\begin{equation*}
		\|Tf\|_{L^{p}(\om)}\le C\|f\|_{L^{p}(\om)},\quad \text{for every }f\in L^{p}(\om),
	\end{equation*}
	where $C$ depends only  on $M_1$ and $p$.
\end{Marc}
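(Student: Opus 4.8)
The plan is to prove the theorem from scratch by the classical distribution-function argument, combining the two endpoint estimates through a level-dependent truncation of $f$. Fix $1<p<\infty$ and let $f\in L^p(\om)$; since $\om$ is bounded we have $L^p(\om)\subseteq L^1(\om)$, so $Tf$ is well defined. The starting point is the layer-cake identity
\[
\|Tf\|_{L^p(\om)}^p = p\int_0^\infty \lambda^{p-1}\, \left|\{x\in\om : |Tf(x)|>\lambda\}\right|\, d\lambda,
\]
which reduces matters to an estimate on the distribution function of $Tf$ at each level $\lambda>0$.

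For each fixed $\lambda>0$ I would split $f=g_\lambda+b_\lambda$ at height $\lambda/2$, where $g_\lambda:=f\,\chi_{\{|f|\le \lambda/2\}}$ is the truncation and $b_\lambda:=f\,\chi_{\{|f|>\lambda/2\}}$. Then $g_\lambda\in L^\infty(\om)$ with $\|g_\lambda\|_{L^\infty(\om)}\le \lambda/2$, while $b_\lambda\in L^1(\om)$, since on $\{|f|>\lambda/2\}$ one has $|f|\le (\lambda/2)^{1-p}|f|^p$ and $f\in L^p(\om)$. By linearity $Tf=Tg_\lambda+Tb_\lambda$. The endpoint bound \eqref{interp_Linf} gives $\|Tg_\lambda\|_{L^\infty(\om)}\le \lambda/2$, hence $|Tg_\lambda|\le \lambda/2$ a.e., which forces the inclusion $\{|Tf|>\lambda\}\subseteq\{|Tb_\lambda|>\lambda/2\}$. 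Applying the weak-type estimate \eqref{interp_wL1} to $b_\lambda$ then yields
\[
\left|\{x\in\om:|Tf(x)|>\lambda\}\right|\le \frac{2M_1}{\lambda}\,\|b_\lambda\|_{L^1(\om)} = \frac{2M_1}{\lambda}\int_{\{|f|>\lambda/2\}}|f(x)|\,dx.
\]

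Inserting this into the layer-cake identity and interchanging the $\lambda$- and $x$-integrals by Tonelli's theorem (the integrand is non-negative and measurable), the condition $|f(x)|>\lambda/2$ becomes $\lambda\in(0,2|f(x)|)$, so
\[
\|Tf\|_{L^p(\om)}^p \le 2pM_1\int_\om |f(x)| \int_0^{2|f(x)|}\lambda^{p-2}\,d\lambda\,dx.
\]
The restriction $p>1$ enters at exactly one point: it is what makes $\int_0^{2|f(x)|}\lambda^{p-2}\,d\lambda=(2|f(x)|)^{p-1}/(p-1)$ converge at the origin. Evaluating gives $\|Tf\|_{L^p(\om)}^p\le \tfrac{2^p p M_1}{p-1}\|f\|_{L^p(\om)}^p$, i.e.\ the asserted bound with $C=2\left(\tfrac{pM_1}{p-1}\right)^{1/p}$, which depends only on $p$ and $M_1$ as claimed.

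I do not anticipate a genuine obstacle, as the scheme is standard once the truncation height is tied to the level $\lambda$ (the factor $1/2$ is precisely what decouples the good part via \eqref{interp_Linf}). The only points requiring care are the routine measurability and integrability checks that make $b_\lambda\in L^1(\om)$ and justify the Tonelli interchange, together with verifying consistency of $T$ on the overlap $L^1(\om)\cap L^\infty(\om)$ so that the decomposition $Tf=Tg_\lambda+Tb_\lambda$ is legitimate. It is also worth recording that the divergence of the inner integral at $p=1$ is not an artifact of the method but reflects the genuine failure of strong $(1,1)$ boundedness, which is why the case $p=1$ must be handled separately in Section \ref{Sec:L1}.
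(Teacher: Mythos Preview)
Your proof is correct and is the standard textbook argument. Note, however, that the paper does not supply its own proof of this statement: the Marcinkiewicz interpolation theorem is quoted as a known result with a reference to \cite[Chapter VIII, Theorem 9.1]{dibenedetto2002real}, and is then invoked as a black box in Proposition \ref{lbl:convergence_Lp}. So there is nothing to compare against beyond observing that your argument is precisely the classical one that such references contain.
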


Note however that the estimate \eqref{interp_Linf} is trivial, and for \eqref{interp_wL1} we use an argument similar to the one utilized for singular integral operators. Further, note that if $\Delta\neq \emptyset$ in $\Theta=\partial\Omega\cup \Delta$, then \eqref{interp_wL1} is inferred if the following holds true
\begin{align}\label{interp_wL1-2}
\left | \left \{x\in \Omega\setminus \Delta:\; |Tf(x)|>\lambda \right \} \right |\le  \frac{\tilde{M}_{1}}{\lambda}\|f\|_{L^{1}( \Omega\setminus \Delta)},\quad \text{for every }f\in L^{1}(\Omega\setminus \Delta).
\end{align}
In order to see this, let $\Delta_\lambda=\{x\in \Omega\setminus \Delta:\; |Tf(x)|>\lambda \}$ and suppose that \eqref{interp_wL1-2} holds true, and $f\in L^1(\Omega)$. Then, we have
\begin{align*}
\left | \left \{x\in \Omega:\; |Tf(x)|>\lambda \right \} \right |&=  | \Delta_\lambda | +\left | \left \{x\in \Delta:\; |Tf(x)|>\lambda \right \} \right |\\
&=  | \Delta_\lambda | +\left | \left \{x\in \Delta:\; |f(x)|>\lambda \right \} \right |\\
%&=   \frac{\tilde{M}_{1}}{\lambda}\|f\|_{L^{1}(\om\setminus \Delta)} +\left | \left \{x\in \Delta:\; |f(x)|>\lambda \right \} \right |\\
&\leq   \frac{\tilde{M}_{1}}{\lambda}\|f\|_{L^{1}(\om\setminus \Delta)} +\frac{1}{\lambda}\|f\|_{L^{1}( \Delta)}\\
&\leq   \frac{\max(\tilde{M}_{1},1)}{\lambda}\|f\|_{L^{1}(\om)},
\end{align*}
where we have used that $Tf|_{\Delta}=f$. Hence, it is enough to consider $\Delta=\emptyset$ so that $\eta>0$ on $\Omega$.

\newtheorem{L1_weak}[eta_existence]{Proposition}
\begin{L1_weak}\label{lbl:L1_weak}
The operator $T$ is bounded from $L^{1}(\om)$ to $L^{1,w}(\om)$, i.e., \eqref{interp_wL1} holds.
\end{L1_weak}

\begin{proof}
We have to show \eqref{interp_wL1} for a suitable constant $M$.
Note initially that our assumption on the mollifier $\rho$ yield the bounds
\[\mathcal{X}_{B_{\eta(x)}(x)}(y)\ge \rho\left (\frac{x-y}{\eta(x)} \right )\ge \rho\left (1/2\right) \mathcal{X}_{B_{\frac{\eta(x)}{2}}(x)}(y),\quad \text{for every }x,y\in\om,\]
where $\mathcal{X}_S$ denotes the characteristic function of the set $S$. Note that in view of the inequality $|\{x\in\om:\,|Tf(x)|>\lambda \}|\le |\{x\in\om:\,T|f(x)|>\lambda \}|$ it suffices to prove \eqref{interp_wL1} for $f\ge 0$.
For $f\ge 0$ we get
\[\int_{\om}\mathcal{X}_{B_{\eta(x)}(x)}(y)f(y)\, dy\ge \int_{\om} \rho\left (\frac{x-y}{\eta(x)} \right ) f(y)\,dy\ge \rho\left (1/2\right)\int_{\om} \mathcal{X}_{B_{\frac{\eta(x)}{2}}(x)}(y) f(y)\,dy. \]
%We further have the estimate
%\[2^{N} \left |B_{\frac{\eta(x)}{2}}(x) \right |\ge \frac{1}{C(x)}\ge\rho\left (1/2\right) \left |B_{\frac{\eta(x)}{2}}(x) \right | \ge \frac{\rho\left (1/2\right)}{2^{N}} \left | B_{\eta(x)}(x)\right |.\]
Consequently, 
\begin{align*}
\frac{M_{\rho}}{\eta(x)^{N}} \int_{\om} \mathcal{X}_{B_{\eta(x)}(x)}(y)f(y)\, dy
&\ge C(x)  \int_{\om} \rho\left (\frac{x-y}{\eta(x)} \right ) f(y)\,dy\ge \frac{ \rho\left (1/2\right)M_{\rho}}{\eta(x)^{N}}  \int_{\om} \mathcal{X}_{B_{\frac{\eta(x)}{2}}(x)}(y) f(y)\,dy.
\end{align*}
Define now the following sets
\begin{align*}
E:= \left \{ x:\; |Tf(x)|>\lambda \right \},\quad\text{and}\quad \tilde{E}:= \left \{x:\; \frac{M_{\rho}}{\eta(x)^{N}} \int_{\om} \mathcal{X}_{B_{\eta(x)}(x)}(y)f(y)\, dy> \lambda \right \},
\end{align*}
where $E\subseteq \tilde{E}$. Further, $\tilde{E}\subseteq \bigcup_{x\in \tilde{E}} B_{\eta(x)}(x)$ and thus from the Vitali covering lemma we have that
\[\tilde{E}\subseteq \bigcup_{x\in I} 5 B_{\eta(x)}(x),\]
where $I$ is a countable subset of $\tilde{E}$ and $B_{\eta(x_{i})}(x_{i})\cap B_{\eta(x_{j})}(x_{j})=\emptyset$ for every $x_{i},x_{j}\in I$ with $x_{i}\ne x_{j}$.
Finally, denoting by $\omega_{N}$ the volume of the unit ball in $\RR^{N}$, we get
\begin{align*}
\left | \left \{x:\; |Tf(x)|>\lambda \right \} \right |=|E|\le |\tilde{E}|
     &\le 5^{N} \sum_{x\in  I} |B_{\eta(x)}(x)|\\
     &= 5^{N} \omega_{N} \sum_{x\in I } \eta(x)^{N}\\
     &\le \frac{5^{N}\omega_{N}M_{\rho}}{\lambda} \sum_{x\in I} \int_{\om}  \mathcal{X}_{B_{\eta(x)}(x)}(y)f(y)\, dy\\
     &\le \frac{5^{N}\omega_{N} M_{\rho}}{\lambda} \int_{\om} |f(y)|\,dy
     = \frac{5^{N} \omega_{N}M_{\rho}}{\lambda} \|f\|_{L^{1}(\om)}.
\end{align*}
\end{proof}

%\newtheorem{Lp_bound}[eta_existence]{Corollary}
%\begin{Lp_bound}\label{lbl:convergence_Lp}
%The operator $T$ is bounded from $L^{p}(\om)$ to $L^{p}(\om)$ for $1<p\le \infty$.
%\end{Lp_bound}
%\begin{proof}
%The proof follows directly from estimate \eqref{interp_Linf}, Proposition \ref{lbl:L1_weak} and Marcinkiewicz interpolation theorem.
%\end{proof}

The result concerning boundedness of $T$ in $L^p(\Omega)$ and the approximation property that the sequence of operators $(T_n)_{n\in\NN}$ strongly tend to the identity is stated next.

\newtheorem{convergence_Lp}[eta_existence]{Proposition}
\begin{convergence_Lp}\label{lbl:convergence_Lp}
The operator $T$ belongs to $\mathscr{L}(L^{p}(\om))$ for $1<p\le \infty$, and if  $f\in L^{p}(\om)$, $1<p< \infty$, then $T_{n}f\to f$ in $L^{p}(\om)$ as $n\to\infty$.
\end{convergence_Lp}

\begin{proof}
The proof of the $L^p$ boundedness follows directly from estimate \eqref{interp_Linf}, Proposition \ref{lbl:L1_weak} and  the application of Marcinkiewicz theorem. Let now $f\in L^{p}(\om)$ with $1<p<\infty$. We can find a function $\phi\in C_{c}(\om)$ such that $\|f-\phi\|_{L^{p}(\om)}\le \frac{\epsilon}{3}$.
Since each $T_{n}$ belongs to $\mathscr{L}(L^{p}(\om))$ there exists a constant $C$  such that 
\begin{equation}\label{Tf_Tphi}
\|T_{n}f-T_{n}\phi\|_{L^{p}(\om)}\le C\|f-\phi\|_{L^{p}(\om)}\le C\frac{\epsilon}{3}.
\end{equation}
The independence of $C$ of $n$ follows from the fact that for $T_n$, $M_1=5^N\omega_N M_\rho$ in \eqref{interp_wL1} is independent of $n$ (see the construction in the previous proposition), the bound in \eqref{interp_Linf} is identically $1$, and hence the operator norm of $T_n$ obtained via  the Marcinkiewicz interpolation theorem is independent of $n$. 

Moreover since $\phi\in C_{c}(\om)$, from Proposition \ref{lbl:C_boundary_preservation} we get that for large enough $n$
\begin{equation}\label{Tphi_phi}
\|T_{n}\phi-\phi\|_{L^{p}(\om)} \le M \frac{\epsilon}{3},
\end{equation}
for another constant $M>0$. Thus, finally for sufficiently large $n$ we have
\begin{align*}
\|T_{n}f-f\|_{L^{p}(\om)}&\le \|T_{n}f-T_{n}\phi\|_{L^{p}(\om)}+\|T_{n}\phi-\phi\|_{L^{p}(\om)}+\|\phi-f\|_{L^{p}(\om)}\\
					& \le C\frac{\epsilon}{3}+M\frac{\epsilon}{3}+\frac{\epsilon}{3}\\
					& \le \tilde{C}\epsilon,
\end{align*}
for some constant $\tilde{C}>0$.

\end{proof}

\section{Properties of $T$ on $W^{1,p}$ spaces}\label{sec:approximationW1p}

%\subsection{$\boldsymbol{W^{1,p}(\om)}$ spaces, $\boldsymbol{1<p\le \infty}$}
We now turn our attention to properties of $Tf$ for $f$ belonging to a suitable Sobolev space.
First, let $\Theta=\partial \Omega$.  Our goal is to establish boundedness of the operator in $W^{1,p}(\Omega)$ (and closed subspaces of interest) for $1<p\le \infty$. Subsequently, we treat the case  $\Theta=\Delta\cup \partial \Omega$ with $\Delta\ne\emptyset$. Note that the case $p=1$ is considered in Section \ref{Sec:L1} as it requires an alternative approach, and slightly different properties for $\eta$ are needed.

We emphasize that the case $\Delta\neq\emptyset $ in the present setting is significantly more complex than in the situation with Lebesgue spaces, as the function $Tf$ is defined by
\[
Tf(x)=
\begin{cases}
   C(x)\int_{\om} \rho \left (\frac{x-y}{\eta(x)} \right )f(y)\,dy,  & \;x\in \Omega\setminus \Delta, \\
    f(x), & \; x\in \Delta.
  \end{cases}
\]
Hence, it is necessary to prove that the above function has a global weak derivative. We start by showing that $T$ is bounded in certain Sobolev spaces for $\Theta=\partial\Omega$, and  then transfer this result via a limiting process to $\Theta=\partial\Omega\cup \Delta$ with $\Delta\neq \emptyset$. 

In addition to the usual $W_{0}^{1,p}(\Omega)$ and $W^{1,p}(\Omega)$ spaces \cite{adams2003sobolev}, we define the Banach space $W^{1,p}_\Gamma(\Omega)$ for $1<p<\infty$ by
\begin{equation*}
W^{1,p}_\Gamma(\Omega):=\overline{\{w\in C^\infty(\Omega): \mathrm{supp} (w)\cap \Gamma=\emptyset\}}^{W^{1,p}(\Omega)},
\end{equation*}
where $\Gamma\subseteq \partial \Omega$ is a closed set.  Note that the spaces $W^{1,p}_\Gamma(\Omega)$ are of utmost importance in the treatment of partial differential equations (PDEs) with mixed (Dirichlet/Neumann/Robin) boundary conditions. It follows that  $W^{1,p}_\Gamma(\Omega)=W^{1,p}(\Omega)$ if $
\Gamma=\emptyset$, and $W^{1,p}_\Gamma(\Omega)=W^{1,p}_0(\Omega)$ if $
\Gamma=\partial\Omega$.

\subsection{The case $\boldsymbol{\Delta= \emptyset}$}
In the next proposition, we compute the gradient of $Tf$, where for the sake of full generality we assume that $f\in \bv_{loc}(\om)$. Recall that the space $\bv_{loc}(\om)$ consists of all the functions $f\in L_{loc}^{1}(\om)$  such that $V(u,A):=\sup\{\int_{\om} u\,\mathrm{div} \phi\,dx:\; \phi\in C_{c}^{\infty}(A,\RR^{N}),\;\|\phi\|_{\infty}\le 1 \}<\infty$ for every open set $A$ which is {compactly supported} inside $\om$.  In that case there exists a $\RR^{N}$-valued Radon measure, denoted here by $Df\in\mathcal{M}_{loc}(\om)$, that represents the distributional derivative of $f$. If $f\in L^{1}(\om)$ and  $V(u,\om)<\infty$ then $Du$ is a finite Radon measure and we say that $f$ is a function of bounded variation. We then write $f\in \bv(\om)$. See \cite{AmbrosioBV} for more details.

In our subsequent exposition, we denote the gradient as a row vector.

%\color{red} Define $BV$ and its local version\color{black}. \com{Is it not standard?}

\newtheorem{Tf_derivative}[eta_existence]{Proposition}
\begin{Tf_derivative}\label{lbl:Tf_derivative}
Let $f\in \bv_{loc}(\om)$ and $\Theta=\partial \Omega$. Then the gradient of $Tf\in C^{\infty}(\om)$ is given by
\begin{equation}\label{Tf_derivative_bv}
%\nabla Tf(x)= T(\nabla f)(x)+ C(x)\frac{\nabla \eta(x)}{\eta(x)}\int_{\om} \rho \left (\frac{x-y}{\eta(x)} \right )( y-x )^{\top}\nabla f(y)\, dy,
\nabla Tf(x)=C(x)\int_{\om} \rho \left ( \frac{x-y}{\eta(x)} \right )\, dDf(y)+C(x)\frac{\nabla \eta(x)}{\eta(x)} \int_{\om} \rho \left ( \frac{x-y}{\eta(x)} \right ) (y-x)^{\top}\cdot dDf(y).
\end{equation}
%where $T(\nabla f)$ is acting componentwise.
\end{Tf_derivative}

\begin{proof}
The proof is based on basic differentiation rules for $\bv$ functions and can be found in Appendix \ref{sec:app}.
\end{proof}

%\noindent
%\emph{Remark.} 
Note that if $f\in W_{loc}^{1,1}(\om)$ then \eqref{Tf_derivative_bv} can be written as
\begin{equation}\label{Tf_derivative_w11}
\nabla Tf(x)= T(\nabla f)(x)+ C(x)\frac{\nabla \eta(x)}{\eta(x)}\int_{\om} \rho \left (\frac{x-y}{\eta(x)} \right )( y-x )^{\top}\nabla f(y)\, dy.
\end{equation}
Here $T(\nabla f)$ is acting componentwise. Since $|x-y|\le \eta(x)$ in \eqref{Tf_derivative_w11}, notice that we immediately obtain the following pointwise estimate for $f\in W_{loc}^{1,1}(\om)$:
\begin{equation}\label{bound_derivative}
|\nabla Tf(x)|\le |T(\nabla f)(x)|+ |\nabla \eta(x)| T(|\nabla f|)(x), \quad x\in\om.
\end{equation}
Bearing this in mind the following proposition follows immediately.
\newtheorem{W1p_bound}[eta_existence]{Proposition}
\begin{W1p_bound}\label{lbl:W1p_bound}
Let   $\Theta=\partial \Omega$. The operator $T$ belongs to $\mathscr{L}(W_\Gamma^{1,p}(\om))$ for $1<p< \infty$, and if $f\in W_\Gamma^{1,p}(\om)$, $1<p< \infty$, then $T_{n}f\to f$ in $W^{1,p}_\Gamma(\om)$ as $n\to\infty$.
\end{W1p_bound}
\begin{proof}
Let $f\in W_\Gamma^{1,p}(\om)$. Note that $\max_{x\in\om}|\nabla\eta(x) |<+\infty$ and since $T\in \mathscr{L}(L^p(\Omega))$, we have $|T(\nabla f)|, T(|\nabla f|)\in L^p(\Omega)$. Hence,  by \eqref{bound_derivative}, $\nabla (Tf)\in L^p(\om)$, and since $Tf\in L^p(\Omega)$, we infer $T:W_\Gamma^{1,p}(\Omega)\to W ^{1,p}(\Omega)$. The boundedness of $T$ follows directly from \eqref{bound_derivative}, $\max_{x\in\om}|\nabla\eta(x) |<+\infty$, and $T\in \mathscr{L}(L^p(\Omega))$. 

%Let now $1<p<\infty$.
 Since $f\in W_\Gamma^{1,p}(\om)$, by definition, there exists $(f_n)_{n\in\NN}$ with $f_n\in C^\infty(\Omega)\cap W^{1,p}(\om)$, $\mathrm{supp}(f_n)\cap \Gamma=\emptyset$ and $f_n\to f$ in  $W^{1,p}(\om)$. By Proposition \ref{lbl:C_boundary_preservation}, one gets $\mathrm{supp}(Tf_n)\cap \Gamma=\emptyset$, and by properties of $T$ we have $Tf\in C^\infty(\Omega)$. 
%Note that $(Tf_n)_{n\in\NN}$ is a Cauchy sequence in $W^{1,p}(\Omega)$: Let $\delta_{m,n}:=f_m-f_n$; then from the linearity of $T$ and \eqref{bound_derivative}, we observe
We further have for a generic constant $M>0$ that
\begin{align*}
\|\nabla Tf_{n}-\nabla Tf\|_{L^p(\Omega)}^{p}&\le M\left ( \|T(\nabla (f_{n}-f))\|_{L^p(\Omega)}^{p}+ \left(\max_{x\in\om}|\nabla\eta(x)|^{p} \right)\|T(|\nabla (f_{n}-f)|)\|_{L^p(\Omega)}^{p}\right )\\
&\le M \|\nabla (f_{n}-f)\|_{L^p(\Omega)}^{p},
\end{align*}
and further $\|Tf_{n}- Tf\|_{L^p(\Omega)}\leq C \|f_{n}-f\|_{L^p(\Omega)}$. Hence, $Tf_n\to Tf$ in $W^{1,p}(\Omega)$, so that $Tf\in W_\Gamma^{1,p}(\Omega)$. Hence, $T:W_\Gamma^{1,p}(\Omega)\to W_\Gamma^{1,p}(\Omega)$, and as a result $T\in\mathscr{L}(W_\Gamma^{1,p}(\Omega))$.

From Proposition \ref{lbl:convergence_Lp} we get that $\|T_{n}f-f\|_{L^{p}(\om)}\to 0$ as $n\to\infty$. Thus, it suffices to show that $\|\nabla T_{n} f-\nabla f\|_{L^{p}(\om)}\to 0$ to prove that $T_{n}f\to f$ in $W^{1,p}_\Gamma(\om)$. Keeping in mind that  from Proposition \ref{lbl:convergence_Lp} we also get that $\|T_{n}(\nabla f)-\nabla f\|_{L^{p}(\om)}\to 0$, from the triangle inequality it is enough to show
\[\|\nabla T_{n} f-T_{n}(\nabla f)\|_{L^{p}(\om)}\to 0 \quad \text{as }n\to\infty.\]
According to Proposition \ref{lbl:Tf_derivative} and similarly to \eqref{bound_derivative} we have that 
\begin{equation}\label{bound_derivative2}
|\nabla T_nf(x)-T_n(\nabla f)(x)|\leq \frac{|\nabla \eta(x)|}{n} T_n(|\nabla f|)(x), 
\end{equation}
for  all $x\in \Omega$. Since $\|T_{n}(\nabla f)\|_{L^p(\Omega)}\to \|\nabla f\|_{L^p(\Omega)}$ we further find
\begin{align*}
\|\nabla T_{n} f-T_{n}(\nabla f)\|_{L^p(\Omega)}
\le \frac{\left(\max_{x\in\om}|\nabla\eta(x)| \right)}{n} \|T_{n}(|\nabla f|)\|_{L^p(\Omega)} \to 0\quad \text{as }n\to\infty,
\end{align*}
which completes the proof.
%\begin{align*}
%\int_{\om} |\nabla T_{n} f-T_{n}(\nabla f)|^{p}dx
%&\le \frac{1}{n^{p}}\int_{\om} C_{n}(x)^{p}|\nabla \eta|^{p} \left |\int_{\om} \rho\left (\frac{x-y}{\frac{1}{n}\eta(x)} \right )\frac{1}{\frac{1}{n}\eta(x)}(y-x)^{\top}\nabla f(y)\,dy\right |^{p}dx\\
%&\le \frac{1}{n^{p}} \int_{\om} C_{n}(x)^{p}|\nabla \eta|^{p} \left |\int_{\om} \rho\left (\frac{x-y}{\frac{1}{n}\eta(x)} \right )\nabla f(y)\,dy\right |^{p}dx\\
%&\le \frac{M}{n^{p}} \int_{\om}  \left | C_{n}(x) \int_{\om} \rho\left (\frac{x-y}{\frac{1}{n}\eta(x)} \right )\nabla f(y)\,dy\right |^{p}dx\\
%&\le \frac{M}{n^{p}} \int_{\om} (T_{n}(\nabla f)^{p}dx\\
%&\le \frac{M}{n^{p}} \|\nabla f\|_{L^{p}}^{p} \to 0\quad \text{as }n\to\infty.
%\end{align*}
%
%
%Since $T$ is bounded from $L^{p}(\om)$ to $L^{p}(\om)$, there exists a constant $M>0$ such that
%\[\|Tg\|_{L^{p}(\om)}\le M \|g\|_{L^{p}(\om)},\quad \text{for every }g\in L^{p}(\om). \]
%If $f\in W^{1,p}(\om)$, $1<p<\infty$, then using \eqref{bound_derivative} we also get, for a generic constant $\tilde{M}$
%\begin{align*}
%\|\nabla T f\|^{p}=\int_{\om}|\nabla T f(x)|^{p}dx
%&\le 2^{p-1} \int_{\om} |T(\nabla f)(x)|^{p}dx+ 2^{p-1} (\max_{x\in\om}|\nabla\eta(x) |)^{p} \int_{\om} (T(|\nabla f|)(x))^{p}dx\\
%&\le \tilde{M} \|T(\nabla f)\|_{L^{p}(\om)}^{p}+\tilde{M} \|T(|\nabla f|)\|_{L^{p}(\om)}^{p}\\
%& \le \tilde{M} \|\nabla f\|_{L^{p}(\om)}^{p}.
%\end{align*}
%Similarly we work for $p=\infty$. Hence $T$ is bounded from  $W^{1,p}(\om)$ to $W^{1,p}(\om)$.
\end{proof}

As in the case of continuous functions, the operator $T$ preserves the traces of $W^{1,p}(\om)$ functions. This is shown in the following proposition. Note that in this case, some regularity for the boundary of $\om$ must be assumed in order to define the trace operator. 

\newtheorem{trace_W1p}[eta_existence]{Proposition}
\begin{trace_W1p}\label{lbl:trace_W1p}
Suppose that $\Theta=\partial \Omega$, $\om$ has Lipschitz boundary and $f\in W^{1,p}(\om)$, with $1<p< \infty$. Then
\begin{equation}\label{trace_W1p_eq}
\tau f=\tau(Tf),
\end{equation}
where $\tau$ denotes the zero-order boundary trace and the above equality is considered in $W^{1-1/p,p}(\partial \om)$. 
\end{trace_W1p}

\begin{proof}
%Note that the case $p=\infty$, is covered by Proposition \ref{lbl:C_boundary_preservation} \com{(this is %also not correct now if we are talking about trace in the $W^{1-1/p,p}(\partial \om)$ sense).}
 Let $f\in W^{1,p}(\om)$, with $1<p< \infty$, and consider a sequence $(f_{n})_{n\in\NN}\subseteq C^{\infty}(\overline{\om})$ convergent to $f$ in $W^{1,p}(\om)$. Note that for this sequence the equality \eqref{trace_W1p_eq} is satisfied. Indeed from Proposition \ref{lbl:C_boundary_preservation} we have that $Tf_{n}=f_{n}$ on $\partial\om$. Moreover, for a fixed $n\in\NN$, $\xi\in\partial \om$  and for $(x_{k})_{k\in\NN}$ with $x_{k}\to \xi$, using \eqref{Tf_derivative_w11} we get
\begin{align*}
|T\nabla f_{n}(x_{k}) - \nabla f_{n} (\xi)|& \le |T(\nabla f_{n})(x_{k})-\nabla f_{n}(\xi)|+|\nabla \eta(x_{k})| T(|\nabla f_{n}|)(x_{k}),
\end{align*}
with the righthand side going to zero as $k\to\infty$. This follows from Proposition \ref{lbl:C_boundary_preservation} yielding $T(\nabla f_{n})(x_{k})\to \nabla f_{n}(\xi)$, as $\nabla f_{n}\in [C(\overline{\om})]^{N}$, $|\nabla \eta(x_{k})|\to 0$ by construction, and since $T(|\nabla f_{n}|)$ is bounded. Thus $T\nabla f_{n}=\nabla f_{n}$ on $\partial \om$.

 Using Proposition \ref{lbl:W1p_bound} and the continuity of the trace with respect to the strong $W^{1,p}$-convergence, we have for a generic constant $M>0$
\begin{align*}
\|\tau f-\tau(Tf)\|_{W^{1-1/p,p}(\partial \om)}
&\le \| \tau f- \tau f_{n}\|_{W^{1-1/p,p}(\partial \om)}+\| \tau f_{n}-\tau(Tf)\|_{W^{1-1/p,p}(\partial \om)}\\
&\le \|\tau f-\tau f_{n}\|_{W^{1-1/p,p}(\partial \om)}+\|\tau(Tf_{n})-\tau(Tf)\|_{W^{1-1/p,p}(\partial \om)}\\
&\le M \|f-f_{n}\|_{W^{1,p}(\om)}+ M\|Tf_{n}-Tf\|_{W^{1,p}(\om)}\\
&\le M \|f-f_{n}\|_{W^{1,p}(\om)}\to 0.
\end{align*}
Hence, \eqref{trace_W1p_eq} holds, and the proof is complete.
\end{proof}

\subsection{The case $\boldsymbol{\Delta\neq \emptyset}$}

\newtheorem{theorem}{Theorem}[section]
We are now in shape to consider the operator $T$ defined with a  function $\eta$ vanishing on any set $\Delta$ contained in $\Omega$ -- such that $\Delta\cup \partial \om$ is closed -- in addition to also vanishing on $\partial\Omega$. For this purpose, consider $\eta^0$ and $\eta^1$ to be the functions obtained via Whitney's theorem for $\Theta=\partial\Omega$ and $\Theta=\partial\Omega\cup \Delta$, respectively, and suppose that $\eta^{i}(x)\leq \sigma(x)/2$ for $x\in \Omega$, $i=1,2$. 
We denote by $T$  the variable step mollifier associated to $\eta^1$, and now we define the limit and a sequence of variable step mollifiers $T^n$.

Let $f\in L_{loc}^1(\Omega)$, we define $\tilde{T}f:\Omega\to \mathbb{R}$ as
\begin{equation}\label{Tf_def_general}
\tilde{T}f(x):=\lim_{n\to\infty} (T^nf)(x), 
\end{equation}
for $x\in \Omega$ where $T^n$ is defined as
\begin{equation*}
T^nf(x):=C^n(x)\int_{\RR^{N}} \rho\left (\frac{x-y}{\eta^1(x)+\frac{1}{n}\eta^0(x)} \right ) f(y)\,dy, \quad\text{with}\quad C^n(x):=\frac{M_\rho}{(\eta^1(x)+\frac{1}{n}\eta^0(x))^N}.
\end{equation*}
Indeed, $T^n$ is the mollifier associated to $\eta=\eta^1+\frac{1}{n}\eta^0$. The following results also hold in this general case.

\newtheorem{W1p_Delta_nonempty}[eta_existence]{Proposition}
\begin{W1p_Delta_nonempty}\label{lbl:W1p_Delta_nonempty}
Let $f\in L_{loc}^1(\Omega)$, then the limit in \eqref{Tf_def_general} is well-defined and $\tilde{T}f=Tf$, where this equality holds almost everywhere in $\Delta$ and everywhere in $\om\setminus \Delta$.
Further, $T$ belongs to $\mathscr{L}(W_\Gamma^{1,p}(\Omega))$ for $1<p<\infty$.
%Further, if $f\in W_\Gamma^{1,p}(\Omega)$ with $p>1$ and $k\in \mathbb{N}$, then $Tf\in W_\Gamma^{1,p}(\Omega)$, and $T$ is a bounded operator on $W_\Gamma^{1,p}(\Omega)$.
\end{W1p_Delta_nonempty}
\begin{proof}
Let $x\in \Delta $, then we have that $\eta^1(x)=0$ and hence
\begin{align*}
|T^n(f)(x)-f(x)|&\leq \frac{1}{\int_{\RR^{N}} \rho\left (\frac{x-y}{\frac{1}{n}\eta^0(x)} \right ) \,dy}\int_{\RR^{N}} \rho\left (\frac{x-y}{\frac{1}{n}\eta^0(x)} \right ) |f(y)-f(x)|\,dy \\
&\leq M_{\rho}\frac{ n^{N}}{ \eta^{0}(x)^{N}} \int_{\om} \mathcal{X}_{B_{\eta^0(x)/n}(x)}(y)|f(y)-f(x)|\, dy.
\end{align*}
Since $f\in L_{loc}^1(\Omega)$, almost every point in $x\in \Omega$ is a Lebesgue point. Thus, the above limit as $n\to\infty$ is zero almost everywhere in $\Omega$, which proves $\tilde{T}f=f$ almost everywhere in $\Delta$. The fact that $\tilde{T}f=Tf$ everywhere in $\Omega\setminus \Delta$ follows by application of the dominated convergence theorem.  Further, analogously as for $T_n$, it follows that $(\|T^n\|_{\mathscr{L}(L^p(\Omega))})_{n\in\mathbb{N}}$ is uniformly bounded.
 %so that $(T^nf)_{n\in\mathbb{N}}$ is bounded in $L^p(\Omega)$, provided that $f\in L^p(\Omega)$.
  By the dominated convergence theorem we have that if $\phi\in C(\overline{\om})$, then $T^{n}\phi \to T\phi$ in $L^{p}(\om)$. Proceeding as in Proposition \ref{lbl:convergence_Lp}, given $\epsilon>0$ we can find a function $\phi\in C_{c}(\om)$ such that $\|f-\phi\|_{L^{p}(\om)}\le \epsilon/3$. Hence, for sufficiently large $n$, and for a generic constant $M$ we have
\begin{align*}
 \|T^{n}f-Tf\|_{L^{p}(\om)}
 & \le \|T^{n}f - T^{n}\phi\|_{L^{p}(\om)} + \|T^{n}\phi-T\phi\|_{L^{p}(\om)}+ \|T\phi-Tf\|_{L^{p}(\om)}\\
 & \le  M \|f-\phi\|_{L^{p}(\om)} +\frac{\epsilon}{3} + M \|\phi-f\|_{L^{p}(\om)}\\
 &\le M\epsilon.
\end{align*}
  This shows that $T^{n}f\to Tf$ in $L^{p}(\om)$ for every $f\in L^{p}(\om)$.
  %Further, by the dominated convergence theorem, we have that $\|T^nf\|_{L^p(\Omega)}\to \|Tf\|_{L^p(\Omega)}$, and since $T^nf\to Tf$ pointwise almost everywhere, we have that  $T^nf\to Tf$ in $L^p(\Omega)$.

%If $f\in C(\overline{\Omega})$ (or $f\in C_0(\Omega)$), the proof that $Tf\in  C(\overline{\Omega})$ (or $Tf\in C_0(\Omega)$) follows directly by the same arguments in the previous sections. Further, $f\in L^p(\Omega)$, it follows  from \eqref{eq:GenTf}  that $Tf\in L^p(\Omega)$. Additionally, analogously as in Section \ref{sec:approximation}, that if $f\in X\in \{C(\overline{\Omega}),C_0(\Omega), L^p(\Omega)\}$, it follows that $T_nf\to f$ in $X$.

Let now $f\in W^{1,p}(\Omega)$, then $T^nf\in W^{1,p}(\Omega)$ and $\nabla T^nf$ is given by
\begin{align*}
\nabla T^nf(x)&= T^n(\nabla f)(x)+\psi_n(x),
\end{align*}
where
\begin{equation*}
\psi_n(x):=C^n(x)\frac{\nabla \eta^1(x)+\frac{1}{n}\nabla\eta^0(x)}{\eta^1(x)+\frac{1}{n}\eta^0(x)}\int_{B_{\eta^1(x)+\frac{1}{n}\eta^0(x)}(x)} \rho \left (\frac{x-y}{\eta^1(x)+\frac{1}{n}\eta^0(x)} \right )( y-x )^{\top}\nabla f(y)\, dy.
\end{equation*}
It follows that $T^n(\nabla f)\to T(\nabla f)$ in $ L^p(\Omega)$  by the first paragraph of the proof. Now we prove that $\psi_n\to \psi$ in $ L^p(\Omega)$ with
\begin{equation*}
\psi(x):=
\left\{
  \begin{array}{ll}
    C(x)\frac{\nabla \eta^1(x)}{\eta^1(x)}\int_{B_{\eta^1(x)}(x)} \rho \left (\frac{x-y}{\eta^1(x)} \right )( y-x )^{\top}\nabla f(y)\, dy, & \hbox{ $x\in \Omega\setminus \Delta$,} \\
    0, & \hbox{ $x\in \Delta$.}
  \end{array}
\right.
\end{equation*}
Let $x\in \Delta$, so that $\eta^1(x)=0$, and
\begin{align*}
|\psi_n(x)|&\leq C^n(x)\frac{\nabla\eta^0(x)}{\eta^0(x)}\int_{B_{\frac{1}{n}\eta^0(x)}(x)} \rho \left (\frac{x-y}{\frac{1}{n}\eta^0(x)} \right )| y-x | |\nabla f(y)|\, dy\\
&\leq \frac{\nabla\eta^0(x)}{n} C^n(x)\int_{B_{\frac{1}{n}\eta^0(x)}(x)} \rho \left (\frac{x-y}{\frac{1}{n}\eta^0(x)} \right )|\nabla f(y)|\, dy.
\end{align*}
Since $\nabla\eta^0$ is bounded, and 
\begin{equation*}
C^n(\cdot)\int_{B_{\frac{1}{n}\eta^0(\cdot)}(\cdot)} \rho \left (\frac{\cdot-y}{\frac{1}{n}\eta^0(\cdot)} \right )|\nabla f(y)|\, dy\to |\nabla f|\quad \text{in} \quad L^p(\Omega),
\end{equation*}
it follows that $\psi_n \chi_{\Delta}\to 0$ in $L^p(\Omega)$.  If $x\in \Omega\setminus \Delta$, then application of the dominated convergence shows that $\psi_n(x)\to \psi(x)$, and analogously as done in Section \ref{sec:approximationLp}, it is shown that $\psi_n \chi_{\Omega\setminus \Delta}\to \psi \chi_{\Omega\setminus \Delta}$ in $L^p(\Omega)$. Therefore,
\begin{align*}
\int_{\Omega} Tf(x)\mathrm{div }{\phi}(x)\mathrm{d}x &=\lim_{n\to\infty} \int_{\Omega} T^nf(x)\mathrm{div }{\phi}(x)\mathrm{d}x =-\lim_{n\to\infty} \int_{\Omega} \nabla T^nf(x)\cdot {\phi}(x)\mathrm{d}x\\
&=-\int_{\Omega} (T(\nabla  f)+\psi)(x)\cdot {\phi}(x)\mathrm{d}x,
\end{align*}
for all $\phi\in C_c^\infty(\Omega)^N$. Therefore, $Tf$ is weakly differentiable with weak gradient $T(\nabla  f)+\psi$ that belongs to $L^p(\Omega)$, i.e., $Tf\in W^{1,p}(\Omega)$. Since the inequality \eqref{bound_derivative} holds here as well, in general in an almost everywhere sense, the boundedness of $T$ in $W^{1,p}(\om)$ follows again from the fact that $\max_{x\in\om} |\nabla \eta(x)|<\infty$ and $T\in\mathscr{L}(L^{p}(\om))$. Hence $T\in \mathscr{L}(W^{1,p}(\om))$ and it remains to show that $\mathscr{L}(W_{\Gamma}^{1,p}(\om))$. 

 If $f\in W_\Gamma^{1,p}(\Omega)$, then there exists $(f_n)_{n\in\NN}$ in $C^\infty(\Omega)$ such that $\mathrm{supp} (f_n) \cap \Gamma=\emptyset$, and $f_n\to f$ in $W^{1,p}(\Omega)$. In order to show that $Tf\in W_{\Gamma}^{1,p}(\om)$ as well, it suffices to show that for  every $k\in\NN$, there exists a $g_{k}\in W^{1,p}(\om)\cap C^{\infty}(\om)$ with $\supp(g_{k})\cap \Gamma=\emptyset$ such that
 \begin{equation}\label{1kover2}
\|Tf-g_{k}\|_{W^{1,p}(\om)}\le \frac{1}{k}.
\end{equation}
We fix $k\in\NN$. Since $f_{n}\to f$ in $W^{1,p}(\om)$ and $T\in \mathscr{L}(W_{\Gamma}^{1,p}(\om))$ we have that there exists $n_{0}\in\NN$ with
\begin{equation}\label{1kover2_a}
\|Tf_{n_{0}}-Tf\|_{W^{1,p}(\om)}\le \frac{1}{2k}.
\end{equation}
According to what has been shown above, we have that $T^{n}f_{n_{0}}\to Tf_{n_{0}}$ in $W^{1,p}(\om)$. Thus, we may choose $k\in\NN$ such that 
\begin{equation}\label{1kover2_b}
\|T^{k}f_{n_{0}}-Tf_{n_{0}}\|_{W^{1,p}(\om)}\le \frac{1}{2k}.
\end{equation}
 Setting $g_{k}:=T^{k}f_{n_{0}}$, from \eqref{1kover2_a} and \eqref{1kover2_b} we obtain \eqref{1kover2}. Noticing that $g_{k}\in  W^{1,p}(\om)\cap C^{\infty}(\om)$ and from Proposition \ref{lbl:C_boundary_preservation} we have that  $\supp(g_{k})\cap \Gamma=\emptyset$. This completes the proof. 
% 
% \vspace{1cm}
%  It follows directly that $(Tf_n)_{n\in\NN}$ is in $C^\infty(\Omega)$ and $\mathrm{supp } (Tf_n) \cap \Gamma=\emptyset$ by Proposition \ref{lbl:C_boundary_preservation}. Furthermore, from the expression of the gradient of $\nabla Tf_n$, it follows that $(Tf_n)_{n\in\NN}$ is a Cauchy sequence in $W_\Gamma^{1,p}(\Omega)$, and $Tf\in W_\Gamma^{1,p}(\Omega)$ analogously as done in Proposition \ref{lbl:W1p_bound}.

\end{proof}

\section{The cases of $L^{1}(\om)$, $W^{1,1}(\om)$ and $\bv(\om)$}\label{Sec:L1}

\subsection{$\boldsymbol{L^{1}(\om)}$ boundedness} \label{sec:L1_bound}
 We would like to show that $T$ also defines an operator in  $\mathscr{L}(L^{1}(\om))$. This is more involved than the cases with $p>1$. Consequently, we treat it separately here. It turns out that we require a lower bound on the decay of $\eta$ as a technical assumption. In order to see that the boundedness of the operator in $L^1(\om)$ is more delicate, consider the following counterexample.
 
% Although it is possible to obtain, under mild assumptions on $(\rho,\eta)$ that the map $T$ is a bounded linear operator on $L^p(\Omega)$ for $p>1$, the situation for $L^1(\Omega)$ is more delicate and this issue reverberates when considering the map $T$ on $W^{1,1}(\Omega)$ and $BV(\Omega)$. 
 
Let $\Omega=(0,1)$,  $\rho=\chi_{(-1,1)}$ and $\eta=\mathrm{dist}(x,\partial \Omega)$, and the function $f_0(x)=(x\log (2/x))^{-2}$ so that $f_0\in L^1(0,1)$. However, $Tf_0(x)=(x\log (1/x))^{-1}$ for $x\in (0,1/2)$, so that $Tf_0\notin L^1(0,1)$. Even further, the same can be  obtained for a $\rho$ such that $\rho|_{\partial B_1(0)}=0$: Let $\rho$ be defined as $\rho(r)=g(r)$ if $r\geq 0$, and $\rho(r)=g(-r)$ if $r<0$, where $g:[0,1]\to\mathbb{R}$ is non-negative (non identically zero), decreasing, $g|_{(0,1)}\in C^\infty(0,1)$, $g(1)=0$, and such that
\begin{equation*}
\lim_{a\downarrow 0} g(a)\int_{a}^{1/2}\int_{-1+a}^{1+a}f_0(x(1-z))\,dz\,dx=+\infty.
\end{equation*}
Then, again $Tf_0\notin L^1(0,1)$, and $T$ is also a mollifier in the sense that if $f\in L^1_{loc}(\Omega)$, then $Tf\in C(\Omega)$.

 We develop the results in this section assuming that $\Delta=\emptyset$ so that $\Theta=\partial\Omega$. We emphasize, however, that all results can be extended to the case $\Delta\neq\emptyset$, as well. % (\color{red}check the $BV$ case\color{black})
 %but we leave it as an open question if this assumption is indeed necessary.
 
 We first state a necessary and sufficient condition for the operator to be bounded in $L^{1}(\om)$:
 
 \newtheorem{L1_L1}[eta_existence]{Proposition}
\begin{L1_L1}\label{lbl:L1_L1}
The operator $T$  belongs to $\mathscr{L}(L^{1}(\om))$ if and only if
\begin{equation}\label{L1_nece_suff}
\mathcal{K}:=\sup_{y\in\om}\int_{\om} C(x) \rho \left (\frac{x-y}{\eta(x)} \right )dx<\infty.
\end{equation}
\end{L1_L1}

\begin{proof}
Suppose \eqref{L1_nece_suff} holds and let $f\in L^{1}(\om)$. Then we have by virtue of Tonelli's theorem
\begin{align*}
\int_{\om}|Tf(x)|\,dx
&= \int_{\om} C(x)\left | \int_{\om}\rho \left (\frac{x-y}{\eta(x)} \right ) f(y) \,dy  \right | \,dx\\
&\le \int_{\om} |f(y)| \left (\int_{\om} C(x) \rho \left (\frac{x-y}{\eta(x)} \right )dx \right ) \,dy\\
&\le  \mathcal{K} \|f\|_{L^{1}(\om)}.
\end{align*}
Conversely suppose that $T$ defines an operator in $\mathscr{L}(L^{1}(\om))$, i.e., there exists a constant $M>0$ such that $\|Tf\|_{L^{1}(\om)}\le M\|f\|_{L^{1}(\om)}$ for every $f\in L^{1}(\om)$. Fixing $y_{0}\in\om$, define 
\[f_{y_{0},n}(x):=\frac{1}{|B_{1/n}(y_{0})|} \mathcal{X}_{B_{1/n}(y_{0})}(x),\]
where $n$ is taken sufficiently large such that $B_{1/n}(y_{0})\subseteq \om$. Then we have that $f_{y_{0},n}\in L^{1}(\om)$ with $\|f_{y_{0},n}\|_{L^{1}(\om)}=1$ for every $n$ sufficiently large. Using Fatou's Lemma and the fact that $y_{0}$ is a Lebesgue point for $\rho \left( \frac{x-\cdot}{\eta(x)}\right )$ we have
\begin{align*}
M&\ge\liminf_{n\to\infty} \int_{\om} |T f_{y_{0},n}(x)|\,dx\\
&=\liminf_{n\to\infty}\int_{\om} C(x) \left ( \frac{1}{|B_{n}(y_{0})|}\int_{B_{n}(y_{0})} \rho \left (\frac{x-y}{\eta(x)} \right )\,dy\right ) \,dx.\\
&\ge \int_{\om} C(x) \left (\liminf_{n\to\infty} \frac{1}{|B_{n}(y_{0})|}\int_{B_{n}(y_{0})} \rho \left (\frac{x-y}{\eta(x)} \right )\,dy\right)\,dx\\
&=\int_{\om} C(x) \rho \left (\frac{x-y_{0}}{\eta(x)} \right )dx.
\end{align*}
Since $y_{0}$ was chosen arbitrarily, the proof is complete.

\end{proof}

Note that the proof of Proposition \ref{lbl:L1_L1} implies that if $T\in\mathscr{L}(L^{1}(\om))$ and $\|T\|_{\mathscr{L}(L^{1}(\om))}$ is the operator norm of $T$, then it holds that
\begin{equation}\label{oper_norm}
\|T\|_{\mathscr{L}(L^{1}(\om))}=\sup_{y\in\om}\int_{\om} C(x) \rho \left (\frac{x-y}{\eta(x)} \right )\,dx.
\end{equation}

 An initial approach to prove \eqref{L1_nece_suff} is as follows: Since $ \rho\left ((x-y)/\eta(x) \right )\leq \mathcal{X}_{B_{\eta(x)}(x)}(y)$ and $C(x)\simeq 1/\eta(x)^N$ for all $x,y\in \Omega$, then if
\begin{equation}\label{integrability_suff_pre}
\sup_{y\in\om} \int_{\om} \frac{1}{\eta(x)^{N}} \mathcal{X}_{B_{\eta(x)}(x)}(y)\,dx<\infty,
\end{equation}
it follows that \eqref{L1_nece_suff} holds true. Therefore, a first attempt 
would be to investigate whether we can construct a smooth function $
\theta$ vanishing on $\partial \om$ such that 
\begin{equation}\label{eta_inverse_int}
\int_{\om} \frac{1}{\theta(x)^{N}}\,dx<\infty,
\end{equation}
and use this function $\theta$ instead of $\eta$ in the construction of the operator $T$. We briefly discuss how such a function $\theta$ can indeed be constructed: In \cite{mazya} it was shown that for a bounded, open set $\om$, whose boundary $\partial \om$ has zero Lebesgue measure and upper Minkowski dimension strictly less then $N-\alpha$, for some $0<\alpha<N$,  it holds that
\begin{equation}\label{dist_inverse_int}
\int_{\om} \frac{1}{\mathrm{dist}(x,\partial \om)^{\alpha}}\,dx\,<\infty.
\end{equation}
Then we set $\theta:=(T\sigma)^{\alpha/N}$, and it is not hard to show that $\theta\in C^{\infty}(\om)\cap C(\overline{\om})$ with $\theta=0$ on $\partial \om$, satisfying also \eqref{eta_inverse_int}. We omit the details of the proof here but it resembles closely the  steps of the proof of Proposition \ref{lbl:eta_vanish_linearly} below. However, this approach has a few disadvantages. One of those is that we must assume some regularity for $\partial \om$. Moreover, note that $\theta$ decreases in a sublinear way close to $\partial \om$, hence  it does not hold that $\theta<\sigma$  which implies that  $B_{\theta(x)}(x)$ are not subsets of $\om$ anymore. One can show that if we define the operator $T$ using this smooth function $\theta$, the boundary values of the functions will still be preserved under this operation, but the same will not be true for derivatives. This is a significant disadvantage of this approach as it hinders the possibility to deal with Sobolev spaces, and in particular to obtain trace-preservation results like the one in Proposition \ref{lbl:trace_W1p}.

%\com{(when we are talking about higher order derivatives we should mention that for our initial operator $T$ and for say a $C^{1}(\overline{\om})$ function then the derivatives are also preserved at the boundary...)}

As a consequence, in what follows, we will construct a smooth function that satisfies \eqref{L1_nece_suff} and vanishes on the boundary at a quadratic rate, resembling more the structure of the original $\eta$ function.
 We note that the following intermediate result can also be derived by \cite{Calderon:1952nx} but we give here an independent proof which employs the operator $T$.  
 \newtheorem{eta_vanish_linearly}[eta_existence]{Proposition}
 \begin{eta_vanish_linearly}[Regularised distance to the boundary function]\label{lbl:eta_vanish_linearly}
 For every $0<\epsilon<1$, there exists a function $\tilde{\eta}\in C^{\infty}(\om)\cap C(\overline{\om})$ with \begin{equation}\label{eta_coercive}
 (1-\epsilon) \sigma(x) \le  \tilde{\eta}(x)\le (1+\epsilon)  \sigma(x), \quad \text{for every }x\in\om.
 \end{equation}
 \end{eta_vanish_linearly}
 
 \begin{proof}
Notice first that in view of \eqref{eta_less_dist}, after some rescaling, the  smooth function $\eta$ of Theorem \ref{lbl:eta_existence} with $\Theta=\partial \om$, can be chosen to satisfy
\begin{align}
\eta(x)&\le \epsilon \sigma(x),\label{eta_eps_dist}\\
|\nabla \eta (x)|&\le \epsilon, \label{nabla_eta_eps}
\end{align}
for every $x\in\om$. 
Hence, we have
\begin{equation}\label{oneminuseps}
1-\epsilon\le 1-\frac{\eta(x)}{\sigma(x)}=\frac{\sigma(x)-\eta(x)}{\sigma(x)}=\frac{\min \{\sigma(y):\;y\in B_{\eta(x)}(x)\}}{\sigma(x)},
\end{equation}
which implies that
\begin{equation}\label{diff_pos}
\sigma(y)-(1-\epsilon)\sigma(x) \ge 0, \quad \text{for every } y\in B_{\eta(x)}(x). 
\end{equation}
We now set $\tilde{\eta}(x)=T\sigma (x)$, where the operator $T$ is defined using the function $\eta$ from above. Then,  from Proposition \ref{lbl:C_boundary_preservation}, $\tilde{\eta}\in C^{\infty}(\om)\cap C(\overline{\om})$, and because of \eqref{diff_pos} we also have that for every $x\in\om$
\begin{align*}
(1-\epsilon)\sigma(x)
&=C(x) \int_{\om} \rho\left (\frac{x-y}{\eta(x)}  \right )(1-\epsilon)\sigma(x)\,dy=\\
&=C(x) \int_{\om} \rho\left (\frac{x-y}{\eta(x)}  \right )\left [\left ((1-\epsilon)\sigma(x)-\sigma(y)\right )+\sigma(y)\right]\,dy\\
&\le C(x) \int_{\om} \rho\left (\frac{x-y}{\eta(x)}  \right )\sigma(y)\,dy\\
&=\tilde{\eta}(x).
\end{align*}
Thus the left inequality of \eqref{eta_coercive} is satisfied. For the right inequality notice that using \eqref{eta_eps_dist} and the fact that $\sigma$ is Lipschitz with constant one, we get
\begin{align*}
\tilde{\eta}(x)
&\le\sigma(x)+|\tilde{\eta}(x)-\sigma(x)|=\sigma(x)+|T\sigma (x)-\sigma(x)|\le \sigma(x)+\sup_{y\in B_{\eta(x)}(x)}|\sigma (x)-\sigma(y)|\\
&\le  \sigma(x)+\sup_{y\in\om:\, |x-y|\le \epsilon \sigma(x)} |\sigma (x)-\sigma(y)|\le \sigma(x)+\epsilon\sigma(x)=(1+\epsilon)\sigma(x),
\end{align*}
ending the proof.
 \end{proof}

 Note that by squaring \eqref{eta_coercive} and rearranging the constants we get the existence of a $C^{\infty}(\om)$ function $\eta:=\tilde{\eta}^{2}$ (not to be confused with the initial $\eta$ used in the proof above) that satisfies 
 \begin{equation}\label{eta_bounded_quadratic}
 \kappa\sigma(x)^{2}\le \eta(x)\le \sigma(x)^{2},\quad \text{for every }x\in\om,
 \end{equation}
 for some $0<\kappa<1$. Observe that the operator $T$ can be also defined  using this $\eta$ and everything proven so far still holds for this case as well. In the next proposition we show that provided $\eta$ satisfies the condition \eqref{eta_bounded_quadratic}  then \eqref{L1_nece_suff} holds and hence $T\in\mathscr{L}(L^{1}(\om))$. In fact we will show that the resulting sequence $(\|T_{n}\|_{\mathscr{L}(L^{1}(\om))})_{n\in\NN}$ is uniformly bounded. 
 
\newtheorem{main_integrability_lemma}[eta_existence]{Theorem}
\begin{main_integrability_lemma}\label{lbl:main_integrability_lemma}
Let $\om\subseteq \RR^{N}$ open and $\eta\in C^{\infty}(\om)$ with the property that there exists $\kappa>0$ such that \eqref{eta_bounded_quadratic} holds.
Then
\begin{equation}\label{integrability_full}
\sup_{n\in\NN}\;\sup_{y\in\om} \int_{\om} C_{n}(x)\rho \left (\frac{x-y}{\frac{1}{n}\eta(x)} \right )dx <\infty,
\end{equation} 
and hence, $T$ and $T_n$ for $n\in\mathbb{N}$ belong to $\mathscr{L}(L^1(\Omega))$.

\end{main_integrability_lemma}

\begin{proof}
Note that it suffices to show
\begin{equation}\label{integrability_suff}
\sup_{n\in\NN}\;\sup_{y\in\om} \int_{\om} \frac{n^{N}}{\eta(x)^{N}} \mathcal{X}_{B_{\frac{\eta(x)}{n}}(x)}(y)\,dx<\infty.
\end{equation}
Note moreover that  the map $x\mapsto C(x)\rho \left (\frac{x-y}{\eta(x)} \right )$ is smooth of compact support. Thus it suffices to show \eqref{integrability_suff} only for $y$'s that belong to a small neighbourhood of the boundary. For that, we will work with $y\in\om$ such that 
\begin{equation}\label{strip}
\sigma(y)<\frac{1}{8}.
\end{equation} 
In fact we may assume, via a rescaling argument, that \eqref{strip} holds for every $y\in\om$.
%Moreover, since all the derivatives of $\eta$ vanish at $\partial \om$ we can also assume that
%\begin{equation}\label{less_squared}
% \eta(x)<\mathrm{dist}(x,\partial \om)^{2}\quad \text{for every }x\in D.
 %\end{equation}
%Setting 
%\[K_{n}=\min_{t\in\om} \frac{n^{N}}{\eta(t)^{N}},\] 
Using the  layer cake representation formula and Fubini's theorem, we get for every $n\in\NN$ and $y\in\om$
\begin{align}
\int_{\om} \frac{n^{N}}{\eta(x)^{N}} \mathcal{X}_{B_{\frac{\eta(x)}{n}}(x)}(y)\,dx
&= \int_{\om} \left (\int_{0}^{\infty} \mathcal{X}_{\left\{s\in\om:\; \frac{n^{N}}{\eta(s)^{N}}\ge t \right\}}(x)dt \right ) \mathcal{X}_{B_{\frac{\eta(x)}{n}}(x)}(y)\, dx\nonumber\\
%&= \int_{\om} \left (\int_{0}^{K_{n}} \mathcal{X}_{\left\{s\in\om:\; \frac{n^{N}}{\eta(s)^{N}}\ge t \right\}}(x)dt \right ) \mathcal{X}_{B_{\frac{\eta(x)}{n}}(x)}(y)\, dx\nonumber\\
%&+ \int_{\om} \left (\int_{0}^{\infty} \mathcal{X}_{\left\{s\in\om:\; \frac{n^{N}}{\eta(s)^{N}}\ge t \right\}}(x)dt \right ) \mathcal{X}_{B_{\frac{\eta(x)}{n}}(x)}(y)\, dx. \nonumber\\
&= \int_{0}^{\infty}\left ( \int_{\om} \mathcal{X}_{\left\{s\in\om:\; \frac{n^{N}}{\eta(s)^{N}}\ge t \right\}}(x) \mathcal{X}_{B_{\frac{\eta(x)}{n}}(x)}(y)     \,dx \right )\, dt\nonumber\\
&= \int_{0}^{\infty} \mathcal{L}^{N}\Bigg (\underbrace{\left \{ x\in\om: \frac{\eta(x)}{n}\le \frac{1}{\sqrt[N]{t}}\;\; \text{and} \;\;y\in B_{\frac{\eta(x)}{n}}(x)  \right\}}_{H_{n,y}^{t}} \Bigg )  dt.\label{integral_all_t}
%&= \int_{K}^{\infty}\left ( \int_{\om} \mathcal{X}_{\left\{s\in\om:\; \frac{1}{\eta(s)^{N}}\ge t \right\}}(x) \mathcal{X}_{B_{\eta(x)}(x)}(y)     \,dx \right )\, dt\\
%&= \int_{K}^{\infty} \mathcal{L}^{N}\Bigg (\underbrace{\left \{ x\in\om: \eta(x)\le \frac{1}{\sqrt[N]{t}}\;\; \& \;\;y\in B_{\eta(x)}(x)  \right\}}_{H_{y}^{t}} \Bigg )  dt.
\end{align}
%We focus first on the first term in \eqref{zeroKn} which from the definition of $K_{n}$ can be written
%\begin{align*}
%\int_{\om} \left (\int_{0}^{K_{n}} \mathcal{X}_{\left\{s\in\om:\; \frac{n^{N}}{\eta(s)^{N}}\ge t \right\}}(x)dt \right ) \mathcal{X}_{B_{\frac{\eta(x)}{n}}(x)}(y)\, dx
%&=K_{n} \int_{\om} \mathcal{X}_{B_{\frac{\eta(x)}{n}}(x)}(y)\, dx\\
%&\le \frac{n^{N}}{(\max\eta)^{N}}\int_{\om} \mathcal{X}_{B_{\frac{\max\eta}{n}}(x)}(y)\, dx\\
%&= \frac{n^{N}}{(\max\eta)^{N}} \mathcal{L}^{N}\left (B_{\frac{\max\eta}{n}}(y)\right )\\
%&= \frac{n^{N}}{(\max\eta)^{N}}  \frac{(\max\eta)^{N}}{n^{N}} \omega_{N}\\
%&= \omega_{N}
%\end{align*}
%where $\omega_{N}=\frac{\pi^{N/2}}{\Gamma\left(\frac{N}{2}+1\right)}$ is the volume of the unit sphere. Hence, notice that this term is bounded independently of $n$ and  $y$. Focusing now on the second term in \eqref{zeroKn}, using Fubini's theorem we have
%\begin{align*}
%\int_{\om} \left (\int_{K_{n}}^{\infty} \mathcal{X}_{\left\{s\in\om:\; \frac{n^{N}}{\eta(s)^{N}}\ge t \right\}}(x)dt \right ) \mathcal{X}_{B_{\frac{\eta(x)}{n}}(x)}(y)\, dx
%&= \int_{K_{n}}^{\infty}\left ( \int_{\om} \mathcal{X}_{\left\{s\in\om:\; \frac{n^{N}}{\eta(s)^{N}}\ge t \right\}}(x) \mathcal{X}_{B_{\frac{\eta(x)}{n}}(x)}(y)     \,dx \right )\, dt\\
%&= \int_{K_{n}}^{\infty} \mathcal{L}^{N}\Bigg (\underbrace{\left \{ x\in\om: \frac{\eta(x)}{n}\le \frac{1}{\sqrt[N]{t}}\;\; \& \;\;y\in B_{\frac{\eta(x)}{n}}(x)  \right\}}_{H_{n,y}^{t}} \Bigg )  dt.
%\end{align*}
We thus have to estimate the measure of $H_{n,y}^{t}$. Let us for convenience set
\[A_{n,y}:=\left \{x\in\om:\;  y\in B_{\frac{\eta(x)}{n}}(x)  \right \}.\]
Then, from \eqref{eta_bounded_quadratic}   we have
\begin{align*}
A_{n,y}& \subseteq \left \{ x\in\om:\; y\in B_{\frac{\sigma(x)^{2}}{n}}(x) \right \}
\subseteq \left \{ x\in\om:\; |x-y|\le   \frac{\sigma(x)^{2}}{n} \right \}.
\end{align*}
Fixing a $y\in\om$, we choose a $\xi_{y}\in \partial \om$ such that
\[|y-\xi_{y}|=\sigma(y).\]
Hence for every $x\in A_{n,y}$ we have
\[|x-y|\le \frac{\sigma(x)^{2}}{n}\le \frac{|x-\xi_{y}|^{2}}{n}. \]
Setting $z=x-y$, the last inequality can be rewritten as
\begin{align*}
|z|&\le \frac{|z+y-\xi_{y}|^{2} }{n}\le  \frac{(|z|+|y-\xi_{y}|)^{2}}{n}
= \frac{1}{n} \left (|z|^{2}+2 |z||y-\xi_{y}| + |y-\xi_{y}|^{2} \right )\; \Rightarrow\\
0&\le \frac{1}{n} \left (|z|^{2}+ (2|y-\xi_{y}|-n)|z| + |y-\xi_{y}|^{2} \right ).
\end{align*}
By solving the latter quadratic equation with respect to $|z|$ we get
\[|z|\le \frac{n}{2}- |y-\xi_{y}| -\frac{1}{2} \sqrt{n^{2}-4n|y-\xi_{y}|}:=r_{n}(y)\quad \text{or} \quad |z|\ge \frac{n}{2}- |y-\xi_{y}| +\frac{1}{2} \sqrt{n^{2}-4n|y-\xi_{y}|}.\]
Notice, however, that we can ignore the second case above as one can also check that in this case we would have $|x-y|\ge \frac{n}{2}$ and thus $x$ cannot belong to $A_{n,y}$. Also, due to \eqref{strip}, the quantity inside the square root is positive, independently of $n$. Hence we have that if $x\in A_{n,y}$ then $x\in B_{r_{n}(y)}(y)$. Notice furthermore that if $x\in B_{r_{n}(y)}(y)$ then $\sigma(x)\le r_{n}(y)+|y-\xi_{y}|$. Thus we have 
\[\frac{\eta(x)}{n}\le \frac{\sigma(x)^{2}}{n}\le\frac{1}{n}\left (\frac{n}{2}-\frac{1}{2} \sqrt{n^{2}-4n|y-\xi_{y}|}\right )^{2}:=\gamma_{n}(y).\]
In other words, so far we have shown
\[A_{n,y}\subseteq \left \{ x\in\om:\; \frac{\eta(x)}{n}\le \gamma_{n}(y) \right \}.\]
On the other hand, for any $x\in B_{r_{n}(y)}(y)$ and for any point $\xi\in \partial \om$, we have
\begin{equation}\label{lb_start}
|x-\xi|\ge |y-\xi_{y}|-r_{n}(y).
\end{equation}
Indeed if that was not the case then there would exist a $x\in B_{r(y)}(y)$ and $\xi\in \partial \om$ such that
\begin{align*}
|x-\xi|&< |y-\xi_{y}|-r_{n}(y)\; \Rightarrow\\
|x-\xi|+r_{n}(y)&< |y-\xi_{y}|\; \Rightarrow\\
|x-\xi|+|x-y|&< |y-\xi_{y}|\; \Rightarrow\\
|y-\xi|&<|y-\xi_{y}|,
\end{align*}
which is a contradiction from the definition of $\xi_{y}$.
Notice that again due to \eqref{strip}, $|y-\xi_{y}|-r_{n}(y)>0$, and again this is independent of $n$. Now since \eqref{lb_start} holds for every $\xi\in\partial \om$ we get
\begin{align*}
\sigma(x) &\ge |y-\xi_{y}|-r_{n}(y)\; \Rightarrow\\
\sigma(x)^{2} &\ge \left ( |y-\xi_{y}|-r_{n}(y)\right )^{2}\; \Rightarrow\\
\frac{\eta(x)}{n}&\ge \frac{\kappa}{n} \left (-\frac{n}{2}+2|y-\xi_{y}|+\frac{1}{2}\sqrt{n^{2}-4n|y-\xi_{y}|} \right )^{2}:=\beta_{n,\kappa}(y).
\end{align*}
Hence we have showed that 
\begin{equation}\label{bounds_on_eta}
A_{n,y}\subseteq \left \{ x\in\om:\; \beta_{n,\kappa}(y)\le \frac{\eta(x)}{n}\le \gamma_{n}(y) \right \}.
\end{equation}
Notice now that if $t\le \frac{1}{\gamma_{n}(y)^{N}}$, i.e., $\gamma_{n}(y)\le \frac{1}{\sqrt[N]{t}}$, then
\begin{align*}
H_{n,y}^{t}&=\left \{ x\in\om: \frac{\eta(x)}{n}\le \frac{1}{\sqrt[N]{t}}\;\; \text{and} \;\;y\in B_{\frac{\eta(x)}{n}}(x)  \right\}\\
&\subseteq  \left \{ x\in\om: \frac{\eta(x)}{n}\le \gamma_{n}(y)\le \frac{1}{\sqrt[N]{t}} \;\;\text{and} \;\;y\in B_{\frac{\eta(x)}{n}}(x)  \right\}\\
&\subseteq \{x\in \om: x\in B_{\gamma_{n}(y)}(y)\}.
\end{align*}
Hence,
\begin{align}
\int_{0}^{ \frac{1}{\gamma_{n}(y)^{N}}}  \mathcal{L}^{N}\Bigg (\left \{ x\in\om: \frac{\eta(x)}{n}\le \frac{1}{\sqrt[N]{t}}\;\; \text{and} \;\;y\in B_{\frac{\eta(x)}{n}}(x)  \right\} \Bigg )  dt
& \le \int_{0}^{ \frac{1}{\gamma_{n}(y)^{N}}} \omega_{N} \gamma_{n}(y)^{N} dt\nonumber\\
&\le \omega_{N},\label{integral_small_t}
\end{align}
where  $\omega_{N}=\frac{\pi^{N/2}}{\Gamma\left(\frac{N}{2}+1\right)}$ is the volume of the unit ball in $\RR^{N}$. Notice moreover that if $t> \frac{1}{\beta_{n,\kappa}(y)^{N}}$, i.e., $\beta_{n,\kappa}(y)> \frac{1}{\sqrt[N]{t}}$, then
from \eqref{bounds_on_eta} we get that $H_{n,y}^{t}=\emptyset$. Finally we calculate
\begin{align}
\int_{\frac{1}{\gamma_{n}(y)^{N}}}^{\frac{1}{\beta_{n,\kappa}(y)^{N}} }  \mathcal{L}^{N}\Bigg (\left \{ x\in\om: \frac{\eta(x)}{n}\le \frac{1}{\sqrt[N]{t}}\;\; \text{and} \;\;y\in B_{\frac{\eta(x)}{n}}(x)  \right\} \Bigg )  dt
&\le \int_{\frac{1}{\gamma_{n}(y)^{N}}}^{\frac{1}{\beta_{n,\kappa}(y)^{N}} }  \mathcal{L}^{N}\left ( B_{\frac{1}{\sqrt[N]{t}}}(y) \right )  dt\nonumber\\
&=\omega_{N} \int_{\frac{1}{\gamma_{n}(y)^{N}}}^{\frac{1}{\beta_{n,\kappa}(y)^{N}} } \frac{1}{t}\,dt\nonumber\\
&=\omega_{N} N \ln\left (\frac{\gamma_{n}(y)}{\beta_{n,\kappa}(y)} \right ).\label{integral_large_t}
\end{align}
We now compute the ratio $\frac{\gamma_{n}(y)}{\beta_{n,\kappa}(y)}$, where for notational convenience we set $|y-\xi_{y}|=\alpha$. We have
\begin{align*}
\sqrt{\frac{\gamma_{n}(y)}{\beta_{n,\kappa}(y)} }
&=\frac{1}{\sqrt{\kappa}} \frac{\frac{n}{2} -\frac{1}{2} \sqrt{n^{2}-4n\alpha}}{-\frac{n}{2} +2\alpha +\frac{1}{2} \sqrt{n^{2}-4n\alpha}}\\
&=\frac{1}{\sqrt{\kappa}} \frac{n-\sqrt{n^{2}-4n\alpha}}{4\alpha-n+\sqrt{n^{2}-4n\alpha}}\\
&=\frac{1}{\sqrt{\kappa}}  \frac{(n-\sqrt{n^{2}-4n\alpha})((4\alpha-n)-\sqrt{n^{2}-4n\alpha})}{(4\alpha-n)^{2} -(n^{2}-4n\alpha)}\\
&=\frac{1}{\sqrt{\kappa}}\frac{ \sqrt{n^{2}-4n\alpha}}{(n-4\alpha)}\\
&=\frac{1}{\sqrt{\kappa}}\frac{\sqrt{n}}{\sqrt{n-4\alpha}},
\end{align*}
and hence
\begin{equation}\label{ratio_gamma_beta}
\frac{\gamma_{n}(y)}{\beta_{n,\kappa}(y)}=\frac{1}{\kappa} \frac{n}{n-4\alpha}<\frac{2}{\kappa}.
\end{equation}
By combining \eqref{integral_all_t} with \eqref{integral_small_t}, \eqref{integral_large_t} and \eqref{ratio_gamma_beta} we get
\begin{align}
\int_{\om} \frac{n^{N}}{\eta(x)^{N}} \mathcal{X}_{B_{\frac{\eta(x)}{n}}(x)}(y)\,dx
&\le \omega_{N}+\omega_{N}N \ln\left (\frac{\gamma_{n}(y)}{\beta_{n,\kappa}(y)} \right ) \nonumber\\
&=\omega_{N}+\omega_{N}N \ln\left (\frac{1}{\kappa} \frac{n}{n-|y-\xi_{y}|}\right )\label{final_est_n}\\
&\le \omega_{N}+\omega_{N}N \ln\left (\frac{2}{\kappa} \right )\label{final_est_no_n}.
\end{align}
Hence \eqref{integrability_suff} and thus \eqref{integrability_full} holds and the proof is complete.

\end{proof} 

Observe that by using the estimates in the proof of Proposition \ref{lbl:L1_weak}, as well as \eqref{oper_norm} and \eqref{final_est_n}, we can now obtain the following bound on $\|T_{n}\|_{\mathscr{L}(L^{1}(\om))}$:
\begin{align*}
\|T_{n}\|_{\mathscr{L}(L^{1}(\om))}=\sup_{y\in\om} \int_{\om} C_{n}(x)\rho \left (\frac{x-y}{\frac{1}{n}\eta(x)} \right )dx
&\le \sup_{y\in\om}\;  M_{\rho}\int_{\om} \frac{ n^{N}}{ \eta(x)^{N}} \mathcal{X}_{B_{\frac{\eta(x)}{n}}(x)} (y)\, dx\\
%&= \sup_{y\in\om}\;  \frac{2^{N}}{\rho\left (1/2\right)\omega_{N} } \int_{\om} \frac{n^{N}}{\eta(x)^{N}} \mathcal{X}_{B_{\frac{\eta(x)}{n}}(x)} (y)\, dx\\
&\le M_{\rho}\, \sup_{y\in\om}\; \left (\omega_{N}+\omega_{N}N \ln\left (\frac{1}{\kappa} \frac{n}{n-|y-\xi_{y}|}\right ) \right )\\
&\le M_{\rho}\, \omega_{N}\left (1+ N\ln \left (\frac{1}{\kappa} \frac{n}{n-\tfrac{1}{8}} \right )\right ), 
\end{align*}
something that implies that
\begin{equation}\label{limsup_L1norms}
\limsup_{n\to\infty} \|T_{n}\|_{\mathscr{L}(L^{1}(\om))}\le M_{\rho}\, \omega_{N} \left (1+ N \ln\left (\frac{1}{\kappa} \right) \right ).
\end{equation}

\noindent
\emph{Note}: It still remains unanswered if the estimate \eqref{integrability_full}  (or \eqref{integrability_suff})  holds without imposing any condition of the type \eqref{eta_bounded_quadratic}. We leave that as an open question.
%\newtheorem*{conj}{Conjecture}
%\begin{conj}\label{lbl:conj}
%Given an open, bounded domain $\om\subseteq \mathbb{R}^{N}$, does there exist a function $\eta\in C^{\infty}(\overline{\om})$, whose values and derivatives of every order vanish at $\partial \om$ and $\eta(x)>0$
% for every $x\in \om$, such that
%\[ \sup_{y\in\om} \int_{\om} \frac{1}{\eta(x)^{N}} \mathcal{X}_{B_{\eta(x)}}(y)\,dx<\infty\]
%holds?
% \end{conj}

For the following results that deal with $L^{1}$ integrability, we will always assume that the operator $T$ is defined via a function $\eta$ that satisfies \eqref{eta_bounded_quadratic}. We next state the corresponding approximation result in $L^{1}(\om)$:
%Note that for such a function we have $\nabla \eta \in C(\overline{\om};\RR^{N})$ and $\nabla \eta=0$ in $\partial \om$.

\newtheorem{convergence_L1}[eta_existence]{Proposition}
\begin{convergence_L1}\label{lbl:convergence_L1}
Let $f\in L^{1}(\om)$. Then $T_{n}f\to f$ in $L^{1}(\om)$ as $n\to\infty$.
\end{convergence_L1}
\begin{proof}
The proof follows exactly the same steps as the one of Proposition \ref{lbl:convergence_Lp}, by taking advantage of the fact that the sequence $(\|T_{n}\|_{\mathscr{L}(L^{1}(\om))})_{n\in\NN}$ is uniformly bounded.
%\com{actually not proven yet as we want the constants in the estimate to be independent of $n$!}
\end{proof}

In view now of Proposition \ref{lbl:convergence_L1} and from the lower semicontinuity of the operator norm with respect to pointwise convergence we have 
\begin{equation}\label{oper_norm_lsc}
1=\|Id\|_{\mathscr{L}(L^{1}(\om))}\le \liminf_{n\to\infty} \|T_{n}\|_{\mathscr{L}(L^{1}(\om))}.
\end{equation}
Thus, the natural question that arises is whether we can further sharpen the upper bound in \eqref{limsup_L1norms} to be one, as result that would also imply that $\lim_{n\to\infty} \|T_{n}\|=1$. In the following paragraphs we describe how we can indeed achieve this, by making some minor amendments to the construction of $T_{n}$.

The first step is to define a sequence of positive, compactly supported, smooth functions $(\rho_{n})_{n\in\NN}$, such $0\le \rho_{n}(x)\le 1$ if and only if $|x|\le 1$, satisfying also
\begin{equation}\label{rhon_condition}
\rho_{n}(x)=1\quad \text{for every $x$ with}\quad |x|\le 1-\frac{1}{n}. 
\end{equation}
That is, $\rho_{n}$ converges to the characteristic function of the ball $B_{1}(0)$.
We also define a sequence of smooth functions $(\eta_{n})_{n\in\NN}$ that satisfy the condition
\begin{equation}\label{eta_n}
\left(1-\tfrac{1}{n}\sigma(x) \right )^{2} \sigma(x)^{2}\le \eta_{n}(x)\le \sigma(x)^{2}.
\end{equation}
Notice that these $\eta_{n}$ can be easily constructed, for instance, by using \eqref{oneminuseps} and applying 
the initial operator $T_{n}$ on $\sigma(x)$, i.e, $\eta_{n}:=T_{n}\sigma$. Observe also that in this case the uniform norm of the gradient of $\eta_{n}$ is uniformly bounded in $n$.

 We can now defined a slightly modified sequence of operators as follows:
\begin{equation}\label{tilde_Tn}
\tilde{T}_{n}f(x):=\tilde{C}_{n}(x) \int_{\om} \rho_{n} \left (\frac{x-y}{\frac{1}{n}\eta_{n}(x)}\right ) f(y)\,dy, \quad x\in\om,\; n\in\NN, \; f\in L_{loc}^{1}(\om), 
\end{equation}
where
\begin{equation}\label{tilde_Cn}
\tilde{C}_{n}(x):=\left (\int_{\om} \rho_{n}  \left (\frac{x-y}{\frac{1}{n}\eta_{n}(x)}\right ) dy \right )^{-1}=\frac{M_{\rho_{n}}}{\eta_{n}(x)^{N}}
\end{equation}
with
\begin{equation}\label{mrn}
M_{\rho_{n}}\le\frac{1}{\omega_{N} \left (1-\frac{1}{n} \right )^{N}}.
\end{equation}

Notice that all the results that we have shown for the operator $T$ (also regarding the convergence of the sequence $T_{n}$) hold for the $\tilde{T}_{n}$ as well. % \com{(again, probably with higher order derivatives there might be an issue)}. 
%
%Moreover by definition of $\rho_{n}$, in view also of the fact that $\rho_{n}\left(1/2 \right)=1$, we have, as in the proof of Proposition \ref{lbl:L1_weak}
%\begin{equation}\label{tilde_Cn_bound}
%\tilde{C}_{n}(x)\le \frac{1}{\omega_{N} \left (1-\frac{1}{n} \right )^{N}} \frac{n^{N}}{\eta_{n}(x)^{N}}.
%\end{equation}
By following the same steps as in Theorem \ref{lbl:main_integrability_lemma} and the remarks after we get
\begin{equation}\label{norm_tilde_Tn}
\int_{\om} \tilde{C}_{n}(x) \rho_{n} \left (\frac{x-y}{\tfrac{1}{n}\eta_{n}(x)} \right ) dx\le  \frac{1}{\left (1-\frac{1}{n} \right )^{N}} \left (1+N\ln\left (\frac{1}{\left(1-\tfrac{1}{\sqrt{n}}\sqrt{\gamma_{n}(y)}\right)^{2}} \frac{\gamma_{n}(y)}{\beta_{n}(y)} \right ) \right ).
\end{equation}
Since $ \frac{\gamma_{n}(y)}{\beta_{n}(y)}$ and $\tfrac{1}{\sqrt{n}}\sqrt{\gamma_{n}(y)}$ converge to one and zero respectively as $n\to\infty$, also independently of $y$, then we obtain
\begin{equation}\label{limsup_L1_Ttilde}
\limsup_{n\to\infty}\|\tilde{T}_{n}\|_{\mathscr{L}(L^{1}(\om))}=\limsup_{n\to\infty} \sup_{y\in\om} \int_{\om} \tilde{C}_{n}(x) \rho_{n} \left (\frac{x-y}{\tfrac{1}{n}\eta_{n}(x)} \right ) dx\le 1.
\end{equation}

%We are left to deal with the $\frac{1}{m}$ factor. However, observe from the proof of Proposition \ref{lbl:eta_vanish_linearly}, that the smooth function $\tilde{\eta}$ that we employ in Lemma \ref{lbl:main_integrability_lemma}, in fact satisfies
%\begin{equation}
%m(x)\mathrm{dist}(x,\partial \om) ^{2}\le \tilde{\eta}(x) \le \mathrm{dist}(x,\partial \om)^{2},\quad \text{for every }x\in\om,
%\end{equation}
%where
%$$m(x)=\left (1-\frac{\eta(x)}{\mathrm{dist}(x,\partial \om) } \right )^{2}\le 1.$$
%Notice that $m$ is a continuous function such that $m=1$ on $\partial \om$. Notice that since we can also assume that $\eta(x)\le \mathrm{dist}(x,\partial \om) ^{2}$, then we can get for every $x\in\om$ and $n\in\NN$
%\begin{equation}
%\frac{\left(1-\mathrm{dist}(x,\partial \om)\right)^{2} \mathrm{dist}(x,\partial \om)^{2}}{n}\le \frac{\tilde{\eta}(x)}{n}\le \frac{\mathrm{dist}(x,\partial \om)^{2}}{n}.
%\end{equation}

\subsection{The case $\boldsymbol{W^{1,1}(\om)}$}
 Up to this point, the mathematical machinery for the $L^1(\Omega)$ was developed on the assumption \eqref{eta_bounded_quadratic} that imposes simply that $\eta\simeq \sigma^2$. This allows us to handle  $W^{1,1}(\om)$ and subsequently $\bv(\om)$.

\newtheorem{W11_bound}[eta_existence]{Proposition}
\begin{W11_bound}\label{lbl:W11_bound}
The operator $T$ belongs to $\mathscr{L}(W^{1,1}(\om))$.
\end{W11_bound}
\begin{proof}
Note first that since the new function $\tilde{\eta}$ has been constructed via the (initial) operator $T$ acting on $\sigma\in W^{1,\infty}(\om)$, then in view of \eqref{lbl:W1p_bound} we have that $\eta\in W^{1,\infty}(\om)$ as well. If now $f\in W^{1,\infty}(\om)$ then, cf. \eqref{bound_derivative}, the following estimate holds
\begin{equation}\label{bound_derivative_tilde_eta}
|\nabla Tf(x)|\le |T(\nabla f)(x)|+ |\nabla \eta(x)| T(|\nabla f|)(x)\le  |T(\nabla f)(x)|+\|\nabla \eta\|_{\infty} T(|\nabla f|)(x), \quad x\in\om.
\end{equation}
Hence, we have for a generic constant $M$
\begin{align*}
\|\nabla Tf\|_{L^{1}(\om)}\textcolor{red}{=} \int_{\om}  |\nabla Tf(x)|\,dx
&\le \int_{\om} |T(\nabla f)(x)|\,dx + M \int_{\om} T(|\nabla f|)(x)\,dx\\
&\le M \int_{\om}|\nabla f (x)|\,dx= M \|\nabla f\|_{L^{1}(\om)}. 
\end{align*}
In the end we get $\|Tf\|_{W^{1,1}(\om)}\le M\|f\|_{W^{1,1}(\om)}.$
\end{proof}

The analogue of Propositions \ref{lbl:trace_W1p} holds in the case of $W^{1,1}(\om)$, as well. We state them here without proofs as these are similar to the setting with $W^{1,p}(\om)$, where $p>1$.
\newtheorem{trace_W11}[eta_existence]{Proposition}
\begin{trace_W11}\label{lbl:trace_W11}
Suppose that $\om$ has Lipschitz boundary and $f\in W^{1,1}(\om)$. Then
$\tau f=\tau (Tf)$.
\end{trace_W11}

\newtheorem{convergence_W11}[eta_existence]{Proposition}
\begin{convergence_W11}\label{lbl:convergence_W11}
Let $f\in W^{1,1}(\om)$. Then $T_{n}f\to f$ in $W^{1,1}(\om)$ as $n\to\infty$.
% \com{not proven yet as the $L^{1}$ convergence of $T_n{f}$ remains open...}
\end{convergence_W11}

\subsection{The case $\boldsymbol{\bv(\om)}$}
In this section we examine how the operator acts on $\bv$ functions. We note that also in this case the operator $T$ is defined via the function $\eta$ that satisfies the extra condition \eqref{eta_bounded_quadratic}. 
%It is easy to check that by following the same procedure as in Proposition \ref{lbl:Tf_derivative} we can show that if $f\in\bv(\om)$
%\begin{equation}\label{Tf_derivative_BV}
%\nabla Tf(x)=C(x)\int_{\om} \rho \left ( \frac{x-y}{\eta(x)} \right )\, dDf(y)+C(x)\frac{\nabla \eta(x)}{\eta(x)} \int_{\om} \rho \left ( \frac{x-y}{\eta(x)} \right ) (y-x)\cdot dDf(y).
%\end{equation}

\newtheorem{BV_bound}[eta_existence]{Proposition}
\begin{BV_bound}\label{lbl:BV_bound}
The operator $T$ belongs to $\mathscr{L}(\bv(\om))$.
\end{BV_bound}
\begin{proof}
Since we have shown already that $T$ belongs to $\mathscr{L}(L^{1}(\om))$, it suffices to show that there exists a constant $M>0$ such that
\[\|\nabla Tf\|_{L^{1}(\om)}\le M |Df|(\om),\quad \text{for all }f\in \bv(\om).\]
Using \eqref{Tf_derivative_bv} we estimate,  for a generic constant $M$,
\begin{align*}
\int_{\om} |\nabla Tf(x)|\, dx
&\le  \int_{\om} C(x)\left |\int_{\om} \rho \left ( \frac{x-y}{\eta(x)} \right )\, dDf(y) \right |\,dx\\
&\quad+  \|\nabla \eta\|_{\infty} \int_{\om} C(x)\left | \int_{\om} \rho \left ( \frac{x-y}{\eta(x)} \right ) \frac{y-x}{\eta(x)}\cdot dDf(y) \right |\,dx\\
&\le  \int_{\om} C(x) \int_{\om} \rho \left ( \frac{x-y}{\eta(x)} \right )\, d|Df|(y) \,dx\\
&\quad+  \|\nabla \eta\|_{\infty}   \int_{\om} C(x) \int_{\om} \rho \left ( \frac{x-y}{\eta(x)} \right )\, d|Df|(y) \,dx\\
&\le  M \int_{\om} \left (\int_{\om} C(x) \rho \left ( \frac{x-y}{\eta(x)} \right ) dx\right ) \, d|Df|(y)\\
& \le M \int_{\om} d|Df|(y)= M |Df|(\om),
\end{align*}
where we have made use of the fact that \eqref{L1_nece_suff} holds.

\end{proof}

Finally, we state the corresponding approximation results in the $\bv$ case. We note that by using the operator $T_{n}$, this approximation is achieved in the sense of weak$^{\ast}$ convergence in $\bv(\om)$. However, by using the alternative operator $\tilde{T}_{n}$, as discussed at the end of Section \ref{sec:L1_bound}, this approximation is achieved in the sense of strict convergence in $\bv(\om)$.
\newtheorem{convergence_BV}[eta_existence]{Proposition}
\begin{convergence_BV}\label{lbl:convergence_BV}
Let $f\in \bv(\om)$. Then the following approximation results holds:
\begin{enumerate}
\item Provided $\om$ has Lipschitz boundary, $T_{n}f\to f$ weakly$^{\ast}$ in $\bv(\om)$ as $n\to\infty$. 
\item $\tilde{T}_{n}f\to f$, strictly in $\bv(\om)$ as $n\to\infty$.
 \end{enumerate}
\end{convergence_BV}
\begin{proof}
We already know from Proposition \ref{lbl:convergence_L1} that $T_{n}f\to f$ in $L^{1}(\om)$. Note that from the estimates in the proof of Proposition \ref{lbl:BV_bound} we obtain
that the sequence $(T_{n}f)_{n\in\NN}$ is bounded in $\bv(\om)$. Then from \cite[Prop. 3.13]{AmbrosioBV} we get that $(T_{n}f)_{n\in\NN}$ converges to $f$ weakly$^{\ast}$ in $\bv(\om)$.
% From the $\bv$ compactness theorem we deduce that every subsequence of $(T_{n}f)_{n\in\NN}$ has a further subsequence which converges to $f$ weakly$^{\ast}$ in $\bv(\om)$. Hence the whole sequence $(T_{n}f)_{n\in\NN}$ converges to $f$.

For the second part, notice initially that since we also have  $\tilde{T}_{n}f\to f$ in $L^{1}(\om)$, from the lower semicontinuity of the total variation we also have 
\begin{equation}\label{tv_liminf}
|Df|(\om)\le \liminf_{n\to\infty} \int_{\om} |\nabla \tilde{T}_{n}f(x)|\, dx.
\end{equation}
Moreover, the estimates in the proof of Proposition \ref{lbl:BV_bound}, actually provide 
\begin{equation*}
\int_{\om} |\nabla \tilde{T}_{n}f(x)|\, dx\le \|\tilde{T}_{n}\|_{\mathscr{L}(L^{1}(\om))} \left(1+ \frac{1}{n} \|\nabla \eta_{n}\|_{\infty}\right) |Df|(\om).
\end{equation*}
From the fact that the uniform norm of $\eta_{n}$ is uniformly bounded, and from \eqref{limsup_L1_Ttilde} 
we obtain
\begin{equation}\label{tv_limsup}
\limsup_{n\to\infty}  \int_{\om} |\nabla T_{n}f(x)|\, dx\le  |Df|(\om).
\end{equation}
Combining, \eqref{tv_liminf} and \eqref{tv_limsup} we obtain the strict approximation result.

\end{proof}

\section{Applications: Density of sets}\label{sec:density}

Constraint subsets $K$ of an arbitrary Banach space $X$ often appear in many fields and problem instances in mathematics. These constraints arise as natural limitations of problem variables (as, e.g., in the obstacle problem where a membrane is deflected by acting forces, but not allowed to penetrate a rigid obstacle; here the problem variable is the deflection out of an equilibrium position) and Fenchel dualization of convex optimization problems, among others.  A natural question that appears in the aforementioned settings is the following one: Given a subspace $Y$ densely and continuously embedded in $X$, does the density relation
\begin{equation}\label{eq:Density}
\overline{K\cap Y}^X= K
\end{equation}
holds true, as well. In particular, it is known that a dense and continuous embedding $Y\hookrightarrow X$ is not enough for the above density statement to hold; see \cite{MR3306389}. 

We are interested in the case where $X$ is a Banach space of functions $f:\Omega\to\mathbb{R}^M$, with  $\Omega\subseteq \mathbb{R}^N$ an arbitrary open set, and where $K_G$ is defined as
\begin{equation}\label{eq:K}
K_G=\left \{f\in X: |(Gf)(x)|\leq \alpha(x) \text{ for almost every } x\in \Omega \right\},
\end{equation}
for $G\in \mathscr{L}(X,Z)$ with $Z$ a subspace of $\mathbb{R}^P$-valued maps  with domain in $\Omega$, and $\alpha:\Omega\to \mathbb{R}$ some non-negative function. An important example is given by $G:=\nabla$ the weak gradient, and $X\subseteq W^{1,p}(\Omega)$, for $1<p<\infty$, and $Z\subseteq L^p(\Omega)^N$. If $G=Id$, then we simply write $K=K_{G}$. The difficulty on establishing whether smooth functions are dense on $K_G$ is given by the fact that $\alpha$ is not necessarily a constant and may have a non-trivial zero set. For example, $\alpha$ may vanish in regions of the boundary, and the interior of $\Omega$, see Figure \ref{fig:fig1}.

\begin{figure}
\includegraphics[scale=.4]{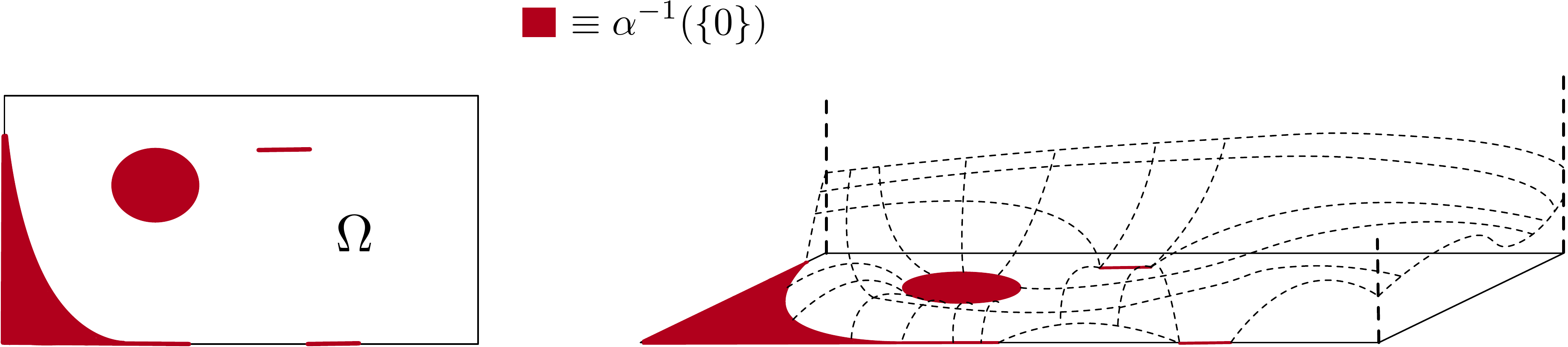}
\caption{Zero set of a possible $\alpha:\Omega\to \mathbb{R}$ (left) and function values (right) }
\label{fig:fig1}
\end{figure}

We consider the following notation throughout this section: Let $\Lambda\subseteq \overline{\Omega}$ be closed, then we define $C_{\Lambda}^\infty(\Omega)$ to be the space of functions $f:\Omega\to \mathbb{R}$ such that $f|_{\Omega\setminus \Lambda}\in C^\infty(\Omega\setminus \Lambda)$ vanishing on $\Lambda$, i.e., $f\in C_{\Lambda}^\infty(\Omega)$ if for
for each $\epsilon>0$, there exists a compact set $\Lambda_\epsilon \subseteq \overline{\Omega}\setminus \Lambda$ such that $(\overline{\Omega}\setminus \Lambda)\setminus \Lambda_\epsilon \subseteq \Omega$ and 
\begin{equation*}
\sup_{x\in (\overline{\Omega}\setminus \Lambda)\setminus \Lambda_\epsilon} |f(x)|\leq \epsilon,
\end{equation*}
and if $x\in \Omega\cap \Lambda$, then $f(x)=0$. In particular, if $\Lambda= \emptyset$, then $C_{\Lambda}^\infty(\Omega)= C^\infty(\Omega)$ and if $\Lambda= \partial\Omega$, then $C_{\Lambda}^\infty(\Omega)= C^\infty(\Omega)\cap C_0(\Omega)$ where $C_0(\Omega)$ is the space of continuous functions vanishing on $\partial\Omega$ (see \cite{attouch2014variational}). Further, if a map $f:\Omega\to \mathbb{R}$ satisfies that $f|_{\Omega\setminus\Lambda}$ is $C^{\infty}$, then (abusing notation) we write $$f\in C^\infty(\Omega\setminus \Lambda).$$ Notice that if $\alpha\in C(\overline{\om})$, $\alpha\ge 0$ and $\Delta=\alpha^{-1}(\{0\})$, then in general it holds $C^{\infty}(\om)\cap K\subsetneq C_{\Delta}^{\infty}(\om)\cap K$ since any function on the latter set is not necessarily differentiable on $\partial \Delta\cap \om$.

We are now ready to state and prove the main density results of this section.

\begin{theorem}\label{thm:density}
Let $\alpha\in C(\overline{\Omega})$ be non-negative and let $\Delta\subseteq\overline{\Omega}$ be defined as $\Delta:=\alpha^{-1}(\{0\})$.

\begin{enumerate}
\item If $G=Id$ and $X=L^p(\Omega)$, with $1<p<\infty$,  it holds true that
\begin{equation*}
\overline{K\cap C_{c}^\infty(\Omega)}^X= K. %\com{ (why "$\equiv$" here? changed it to "$=$")}
\end{equation*}

\item If $G=Id$, then for $X=W_\Gamma^{1,p}(\Omega)$, with $\Gamma\subseteq\partial\Omega $, it holds true that
\begin{equation*}
\overline{K\cap C_{\Delta}^\infty(\Omega)}^X= K.
\end{equation*}

\item If $G=\nabla$, then for $X=W_{\Gamma}^{1,p}(\Omega)$,  {with $\Gamma\subseteq\partial\Omega $},
it holds true that
\begin{equation*}
\overline{K_G\cap C^\infty(\Omega\setminus\Delta)}^X= K_G.
\end{equation*}
%and for  $X=W_\Gamma^{1,p}(\Omega)$, that 
%\begin{equation*}
%\overline{K_G\cap C_\Gamma^\infty(\Omega\setminus\Delta)}^X= K_G.
%\end{equation*}
\end{enumerate}
\end{theorem}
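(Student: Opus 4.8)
The plan is to use the variable step mollifier $T$ (with $\Theta=\partial\Omega\cup\Delta$) as the main approximation tool, exploiting that it preserves membership in $K$ (or $K_G$) because of the way $\eta$ vanishes on $\Delta$, and then to post-mollify with a standard mollifier to gain full smoothness away from $\Delta$. For part (i), given $f\in K\subseteq L^p(\Omega)$, the sequence $T_nf$ converges to $f$ in $L^p(\Omega)$ by Proposition \ref{lbl:convergence_Lp}; moreover $|T_nf(x)|\le \sup_{y\in B_{\eta(x)/n}(x)}|f(y)|$ is an average against a probability density, and since $\alpha$ is continuous with $|f|\le\alpha$ a.e., a careful estimate using $\eta(x)\le\sigma(x)$ and $\eta$ vanishing on $\Delta$ gives $|T_nf|\le \alpha$ up to a term controlled by the modulus of continuity of $\alpha$ on $B_{\eta(x)/n}(x)$; after rescaling $\alpha$ slightly (or using a convex-combination argument $\lambda T_nf$ with $\lambda\uparrow 1$) one lands inside $K$. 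Then $T_nf\in C^\infty(\Omega)$ and has compact support in $\Omega$ when $f$ is first truncated, so a diagonal argument produces the required $C_c^\infty(\Omega)$ approximants in $K$.

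For parts (ii) and (iii), the extra ingredient is the $W^{1,p}$-boundedness and approximation of $T$ established in Propositions \ref{lbl:W1p_bound} and \ref{lbl:W1p_Delta_nonempty}, which also handle the case $\Delta\ne\emptyset$ and guarantee $Tf$ has a global weak gradient. I would first show that $T$ (with $\eta$ vanishing on $\Delta$) maps $K\cap X$ into itself in the $G=Id$ case exactly as above, and into itself in the $G=\nabla$ case using the gradient formula \eqref{Tf_derivative_w11}: the correction term $C(x)\frac{\nabla\eta(x)}{\eta(x)}\int\rho(\cdots)(y-x)^\top\nabla f(y)\,dy$ is bounded in modulus by $|\nabla\eta(x)|\,T(|\nabla f|)(x)$, but since we want $|\nabla Tf|\le\alpha$ we need $|\nabla\eta|$ small near $\Delta$; this is arranged by the rescaling $\eta(x)\le\epsilon\sigma(x)$, $|\nabla\eta|\le\epsilon$ available from the proof of Proposition \ref{lbl:eta_vanish_linearly}, so the correction is an $\epsilon$-perturbation and, again after a slight dilation of $\alpha$ or a $\lambda\uparrow1$ argument, $\nabla Tf$ stays in the constraint. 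Once $Tf\in K_G$ (resp. $K$) and $Tf\in C^\infty(\Omega\setminus\Delta)$, membership in $C^\infty_\Delta(\Omega)$ in part (ii) follows from Proposition \ref{lbl:C_boundary_preservation} ($Tf=f=0$ on $\Delta$ and the support/trace behavior near $\Gamma$ is inherited via $W^{1,p}_\Gamma$), and in part (iii) $C^\infty(\Omega\setminus\Delta)$ is immediate.

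The remaining step is to upgrade convergence from the $T_n$-sequence (which converges in $L^p$, resp. $W^{1,p}$, by Propositions \ref{lbl:convergence_Lp} and \ref{lbl:W1p_bound}) to honest approximants lying in the prescribed smooth class while staying in $K$: here I would use $T_n f$ for large $n$, which is already smooth on $\Omega\setminus\Delta$, and then, if needed in part (i), compose with a standard compactly-supported mollification at a scale much finer than $\mathrm{dist}(\mathrm{supp},\partial\Omega)$ to obtain $C_c^\infty$, checking that this second mollification only shrinks $|Gf|$ pointwise (true for $G=Id$ by Jensen, and for $G=\nabla$ because mollification commutes with $\nabla$) and hence does not exit $K$. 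A diagonal/$\epsilon$-$\delta$ argument combining the dilation parameter $\lambda_k\uparrow1$, the index $n_k\to\infty$, and the mollification scale then yields the density statement. The trivial inclusion $\overline{K\cap C}^X\subseteq K$ holds since $K$ is closed and convex, hence weakly closed, so only the reverse inclusion needs the above construction.

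The main obstacle I anticipate is the constraint-preservation estimate in the $G=\nabla$ case (part (iii)): unlike the value constraint, where $T_nf$ is a genuine average of $f$ and Jensen-type bounds apply directly, the gradient $\nabla Tf$ carries the extra term involving $\nabla\eta/\eta$, and one must show this term is dominated by the slack between $|\nabla f|$ and $\alpha$ near $\Delta$ — which is delicate precisely where $\alpha\to0$. Controlling it requires simultaneously exploiting that $|\nabla\eta|\to0$ on $\Delta$ (all derivatives of $\eta$ vanish there), that $|x-y|\le\eta(x)$ on the support of the kernel, and the continuity of $\alpha$, and then absorbing the residual error by the dilation trick. Getting the quantifiers in the right order — choosing $\epsilon$ (the $\eta$-scaling) and $\lambda$ (the dilation) uniformly enough to close the argument for all $f\in K_G$ — is where the real care lies.
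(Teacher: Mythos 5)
Your overall architecture matches the paper's: mollify with $T_n$ built from an $\eta$ vanishing on $\Theta=\partial\Omega\cup\Delta$, observe $|T_nf|\le\sup_{B_{\eta(x)/n}(x)}\alpha$, dilate by $\lambda_n\uparrow 1$ to re-enter $K$, use the $L^p$/$W^{1,p}$ convergence of $T_n$, and truncate first for part (i). (The extra post-mollification you propose for (i) is unnecessary — $T_nf_m$ is already in $C_c^\infty(\Omega)$ once $f_m$ is truncated — and your justification that a standard mollification ``does not exit $K$'' by Jensen is false for a spatially varying bound: it only yields $|f_\epsilon(x)|\le\sup_{B_\epsilon(x)}\alpha$, which is the same problem you started with.)

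The genuine gap is in the step you describe as ``a careful estimate\dots gives $|T_nf|\le\alpha$ up to a term controlled by the modulus of continuity of $\alpha$.'' That estimate is \emph{additive}: $|T_nf(x)|\le\alpha(x)+\omega(\eta(x)/n)$, with $\omega$ the modulus of continuity. An additive error, however small and uniform, cannot be absorbed by the dilation $\lambda\uparrow 1$, because the admissible $\lambda$ at a point $x$ is $\alpha(x)/(\alpha(x)+\omega(\eta(x)/n))$, and its infimum over $\Omega$ is $0$ whenever $\alpha$ vanishes somewhere (which is exactly the case of interest, $\Delta\neq\emptyset$ or $\alpha\to 0$ at $\Gamma$). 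What is needed is the \emph{multiplicative} control $M_n(x):=\sup_{B_{\eta(x)/n}(x)}\alpha/\alpha(x)\to 1$ uniformly, i.e.\ $\omega(\eta(x)/n)/\alpha(x)\to 0$ uniformly including as $x\to\Delta\cup\partial\Omega$. A generic Whitney function for $\Theta$, even after your proposed uniform rescaling $\eta\le\epsilon\sigma$, $|\nabla\eta|\le\epsilon$, does not achieve this: a constant shrinking factor does not make $\eta$ vanish fast enough \emph{relative to the decay of $\alpha$} near its zero set. The paper resolves this by building a new $\eta=\sum_i\delta_i\eta_i^0$ with weights $\delta_i=\inf_{\Theta_i}\omega^{-1}(\alpha\,\eta^0)$, which forces $\omega(\eta(x))\le\alpha(x)\eta^0(x)$, and then invokes Dini's theorem to upgrade the pointwise monotone convergence $H_n\downarrow 0$ to uniform convergence of $M_n\to 1$. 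This adaptation of $\eta$ to $\alpha$ (not merely to $\Delta$) is the missing idea in your proposal; without it the quantifier order you worry about in your last paragraph genuinely cannot be closed, already for $G=Id$, not only for $G=\nabla$. Once $M_n\to1$ uniformly is in hand, your treatment of the gradient case is essentially the paper's: the extra term is an $O(1/n)$ multiplicative perturbation since $|y-x|\le\eta(x)/n$ cancels the $\eta(x)^{-1}$ factor.
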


\begin{proof}
Note that since $\alpha$ is continuous, $\Delta=\alpha^{-1}(\{0\})$ is closed. Let $\eta^0:=\sum_{i=1}^\infty\eta_i^0$ be the function obtained in  Theorem \ref{lbl:eta_existence} for $\Theta=\partial \om \cup \Delta$.
 % if $X\in \{W^{1,p}(\Omega), L^p(\Omega) \}$ and $\Theta=\Gamma%\cup\Delta$ if $X=W_\Gamma^{k,p}(\Omega)$. 
 Recall that $\supp (\eta_i^0)= \overline{B_i}$ where $(B_i)_{i\in\NN}$ is a sequence of open balls compactly supported in  $\Omega\setminus \Theta$, each $\eta_i^0$ has bounded derivatives of order up to $i$ by the constant $2^{-i}$ and we can also suppose that $\eta^0\leq 1$.

\textit{Step 1}:   We first show that we can construct a smooth function $\eta:\overline{\Omega}\to\mathbb{R}$ such that $\eta(x)=0$ if and only if $x\in \Theta$ and 
\begin{equation}\label{eq:Mn}
M_n(x):= 
\begin{cases}
\frac{\sup_{\{y\in \Omega: |x-y|\leq \eta(x)/n\}}\alpha(y)}{\alpha(x)}, & \text{ if }x\in \overline{\om}\setminus \Theta,\\
1, & \text{ if } x\in \Theta,
\end{cases}
\end{equation} 
satisfies $M_n\to 1$ uniformly in $C(\overline{\Omega})$. 

Let $\omega:[0,\infty)\to [0,\infty)$ denote the modulus of continuity of $\alpha$, i.e.,
\begin{equation*}
|\alpha(z_1)-\alpha(z_2)|\leq \omega(|z_1-z_2|), \quad \text{for all } z_1,z_2\in \overline{\Omega},
\end{equation*}
where $\omega$ is strictly increasing, $\lim_{t\downarrow 0}\omega(t)=\omega(0)=0$, and infinitely differentiable in $(0,\infty)$. Then, for $x\in \overline{\Omega}\setminus \Theta$, we have
\begin{equation*}
|M_n(x)-1|\leq \frac{\sup_{\{y\in \Omega: |x-y|\leq \eta(x)/n\}}|\alpha(y)-\alpha(x)|}{\alpha(x)}\leq  \frac{\omega(\eta(x)/n)}{\alpha(x)}=:H_n(x).
\end{equation*}
Note that $H_{n+1}(x)\leq H_n(x)$ since $\omega$ is increasing. We now show that we can choose $\eta$ such that
\begin{equation}\label{eq:EtaConditions}
\omega(\eta(x))\leq \alpha(x), \qquad \text{and}\qquad \omega(\eta(x))/\alpha(x)\to 0, \quad\text{ as }\quad \overline{\Omega}\setminus \Theta \ni x\to z\in \Theta.
\end{equation}
Let $\Lambda_n$ be defined as
\begin{equation*}
\Lambda_n:=\{x\in \overline{\Omega}\setminus \Theta: 1/(n+1)< d(x, \Theta)\leq 1/n\},
\end{equation*}
for $n\in\mathbb{N}$ and $\Lambda_0:=\{x\in \overline{\Omega}\setminus \Theta: \;1< d(x, \Theta)\}$. Note that each $B_{i}$ is contained in a finite union of $\Lambda_n$ that we denote by $\Theta_i:=\cup_{j=1}^{R_i} \Lambda_{n_j}$. Define the constants
\begin{equation*}
\delta_i:=\inf_{z\in \Theta_i} \omega^{-1}(\alpha(z)\eta^{0}(z))>0,
\end{equation*}
and let 
\begin{equation}\label{eq:etasp}
\eta (x):=\sum_{i=1}^\infty \delta_i \eta^0_i(x).
\end{equation}
If $x\in\Lambda_n$, then in the above sum for $\eta(x)$ we only need to consider $i$ terms where $x\in B_i$, and hence
\begin{equation*}
\eta(x)\leq \sum_{j=1}^\infty \delta_{i_j} 2^{-i_j}\leq \omega^{-1}(\alpha(x)\eta^0(x))  \sum_{j=1}^\infty 2^{-i_j}\leq \omega^{-1}(\alpha(x)\eta^0(x)),
\end{equation*}
and thus $\omega(\eta(x))\leq \alpha(x)\eta^0(x)$ for every $x\in \overline{\om}$. Hence, note that $H_n$ can be continuously extended to $\overline{\Omega}$, still denoted by $H_{n}$, by setting $H_{n}(x)=0$ for every $x\in \Theta$. The same can be done for $M_{n}$, by setting $M_{n}(x)=1$ for every $x\in\Theta$.  Finally, since $(H_n)_{n\in\NN}$ is a non-increasing continuous sequence that converges pointwise to zero, by Dini's theorem, $H_n\to 0$ uniformly, and thus $M_{n}\to 1$ uniformly, as well.

\textit{Step 2:  $G=Id$.} 
 Let $f\in K$ with $f\in W_\Gamma^{1,p}(\Omega)$, and let $T_n$ be the singular mollifier constructed with regulating function $\eta$ in \eqref{eq:etasp}. Since $f\in W_\Gamma^{1,p}(\Omega)$, then $T_{n}f\in W_\Gamma^{1,p}(\Omega)$ and $T_{n}f\to f$ in $W_\Gamma^{1,p}(\Omega)$. Since $f\in K$, then
\begin{equation*}
|T_{n}f(x)|\leq C_{n}(x) \int_{\om} \rho\left (\frac{x-y}{\frac{1}{n}\eta(x)} \right )\alpha(y)\,dy\leq M_n(x)\alpha(x).
\end{equation*}
Consider
\begin{equation*}
\beta_n:=\frac{1}{1+\|M_n-1\|_{\infty}},
\end{equation*}
which implies that $|\beta_n T_{n}f(x)|\leq \alpha(x)$ for all $x\in \Omega$. Since $\beta_n\to 1$, then $\beta_nT_{n}f\to f$ in $W_\Gamma^{1,p}(\Omega)$. Since  $\beta_nT_{n}f\in C_{\Delta}^\infty(\Omega)$, $(ii)$ is proven. 

In order, to prove  $(i)$, consider the following. Let $f\in L^p(\Omega)\cap K$ and define the sequence $(f_m)_{m\in\NN}$ as follows:
\begin{equation*}
f_m(x)=f(x),\quad \text{if }\quad  x\in \left\{y\in \Omega: \;\frac{1}{m}\leq d(y,\Theta)\right\}, \quad \text{and} \quad f_m(x)=0,\quad \text{otherwise}.
\end{equation*}
Note that $f_m\to f$ in $L^p(\Omega)$, and that each $f_m$ has a compact support. The latter implies that for each $m,n\in\mathbb{N}$, we have that $T_{n}f_{m}\in C_c^\infty(\Omega)$. Further, since $f\in K$, then $f_m\in K$ for all $m\in \mathbb{N}$, and with $\beta_n$ as defined  above, it follows that $\beta_nT_{n}f_{m}\in K$ and $\beta_nT_{n}f_{m}\to f_m$ in  $L^p(\Omega)$ as $n\to\infty$. Since $f_m\to f$ in $L^p(\Omega)$, the sequence $(\beta_nT_{n}f_{m_n})_{n\in\NN}$ for some subsequence $(f_{m_n})_{n\in\NN}$ satisfies $\beta_nT_{n}f_{m_n}\to f$ in $L^p(\Omega)$ and $(i)$ is proven.

\textit{Step 3:} Let $G=\nabla$, and $f\in K_G$ with $f\in W_\Gamma^{1,p}(\Omega)$. Then, $T_{n}f\in W_\Gamma^{1,p}(\Omega)\cap C^{\infty}(\om\setminus \Delta)$ and from \eqref{lbl:Tf_derivative}, we have the estimate 
\begin{align*}
|\nabla T_n(f)(x)|
&\leq C_n(x)\int_{\Omega} \rho \left (\frac{x-y}{\frac{1}{n}\eta(x)} \right ) |\nabla f(y)| \, dy\\
&\quad+ C_n(x)\frac{|\nabla \eta(x)|}{\eta(x)}\int_{\Omega} \rho \left (\frac{x-y}{\eta(x)} \right )| y-x | |\nabla f(y)|\, dy\\
&\leq C_n(x)\int_{\Omega} \rho \left (\frac{x-y}{\frac{1}{n}\eta(x)} \right ) \alpha(y) \, dy\\
&\quad+ C_n(x)\frac{|\nabla \eta(x)|}{n}\int_{\Omega} \rho \left (\frac{x-y}{\frac{1}{n}\eta(x)} \right ) \alpha(y) \, dy \\
&\leq \left(1+\frac{\sup_{x\in \Omega}|\nabla \eta(x)|}{n}\right)M_n(x)\alpha(x).
\end{align*}
Define $\tilde{M}_n(x):=\left(1+\sup_{x\in \Omega}|\nabla \eta(x)|/n\right)M_n(x)$ and $\tilde{\beta}_n$ analogously as $\beta_n$ but with $\tilde{M}_n$ instead of $M_n$. Hence, $\tilde{M}_n\to 1$ in the sense of $C(\overline{\Omega})$, and $\tilde{\beta}_n\uparrow 1$, so that $|\beta_n \nabla T_n(f)(x)|\leq \alpha(x)$ for all $x\in \Omega$, and $\beta_n  T_n(f)\to f$ in $X$ which proves $(iii)$.
\end{proof}

\section{Conclusion and outlook} 

We have established analytical properties of the variable step mollifier introduced by Burenkov; in particular, we have determined its boundedness on $L^p$, $W^{1,p}$ and $\bv$ spaces, and proven boundary/interior values preservation properties via the construction of the mollifier. Further, we have shown that the operator is versatile to consider the question of density of smooth functions on certain closed, convex sets in Banach spaces of functions. Future steps concern the application of the mollifier for determination of  constraint qualifications in the context of Fenchel dualization.

\section*{Acknowledgements} 

The authors acknowledge the support of the DFG (German Research Foundation) through the DFG-SPP 1962: Priority Programme ``Non-smooth and Complementarity-based Distributed Parameter Systems: Simulation and Hierarchical Optimization'' within Projects 9 and 10, and under Germany's Excellence Strategy - The Berlin Mathematics Research Center MATH+ (EXC-2046/1, project ID: 390685689) within projects AA4-3 and EF3-5. This work was also supported by the MATHEON Research Center project CH12 funded by the Einstein Center for Mathematics (ECMath) Berlin. KP further acknowledges the financial support of Alexander von Humboldt Foundation.

\appendix
\section{}\label{sec:app}

\begin{proof}[Proof of Proposition \ref{lbl:Tf_derivative}.]
Note first that the product rule gives
\begin{equation}\label{product_rule}
\nabla Tf(x)= \nabla C(x) \int_{\om} \rho\left (\frac{x-y}{\eta(x)} \right ) f(y)\,dy+ C(x) \nabla_{x}\left (\int_{\om} \rho\left (\frac{x-y}{\eta(x)} \right ) f(y)\,dy \right ).
\end{equation}
In what follows we compute a series of gradients, towards a simplification of \eqref{product_rule}.
We first compute the gradient of the map $x\mapsto \rho \left (\frac{x-y}{\eta(x)} \right )$:
\begin{align}
\nabla_{x} \left (\rho\left (\frac{x-y}{\eta(x)} \right ) \right )
&= \nabla \rho \left (\frac{x-y}{\eta(x)}  \right ) \cdot \nabla _{x}\left (\frac{x-y}{\eta(x)}  \right )\nonumber\\
&=  \nabla \rho \left (\frac{x-y}{\eta(x)}  \right ) \cdot \left [\frac{1}{\eta(x)^{2}}\left (\eta(x) I_{N\times N} -(x-y)^{\top}\nabla \eta(x) \right )  \right ]\label{for_altern}\\
&= \nabla \rho \left (\frac{x-y}{\eta(x)}  \right ) \cdot \left ( \frac{1}{\eta(x)} I_{N\times N}- \frac{(x-y)^{\top}}{\eta(x)^{2}} \nabla \eta(x) \right )\nonumber\\
&=\frac{1}{\eta(x)} \nabla \rho \left (\frac{x-y}{\eta(x)}  \right ) \cdot \left (  I_{N\times N}- \frac{(x-y)^{\top}}{\eta(x)} \nabla \eta(x)\right )\nonumber\\
&= -\nabla_{y} \left (\rho\left (\frac{x-y}{\eta(x)} \right ) \right )   \cdot \left (  I_{N\times N}- \frac{(x-y)^{\top}}{\eta(x)} \nabla \eta(x)\right ) \label{gradx_rho}.
\end{align}
Note also that
\begin{equation}\label{grad_C}
\nabla C(x)=-N\frac{M_{\rho}}{\eta(x)^{N}}\frac{\nabla \eta(x)}{\eta(x)}=-N C(x) \frac{\nabla \eta(x)}{\eta(x)}.
\end{equation}
We now argue that we can alternate differentiation and integration, i.e., for every $x\in\om$ we have
\begin{equation*}
\nabla_{x} \left (\int_{\om} \rho \left (\frac{x-y}{\eta(x)} \right )f(y)\,dy \right )=\int_{\om} \nabla_{x}\left ( \rho \left (\frac{x-y}{\eta(x)} \right )\right ) f(y)\,dy. 
\end{equation*}
This can be done, provided that given $x\in\om$ and an open neighbourhood of it $U\Subset \om$, we have that 
\[\nabla_{x}\left ( \rho \left (\frac{x-y}{\eta(x)} \right )\right ) f(y)\le G(y),\quad \text{for every $x\in U$ and $y\in\om$},\]
where $G$ is an integrable function.  Since the expression of interest is zero unless $|y-x|\le \eta(x)$, then in view of \eqref{for_altern}  and the fact that $x$ is away from the boundary, we have
\begin{align*}
\left |\nabla_{x}\left ( \rho \left (\frac{x-y}{\eta(x)} \right ) f(y) \right )\right |
&\le \left |\nabla \rho \left (\frac{x-y}{\eta(x)} \right ) \right | \left(\frac{1}{\eta(x)} + |\nabla \eta (x)|\right) |f(y)|\\
&\le C \mathcal{X}_{U_{\eta}} (y) |f(y)|,
\end{align*}
where $U_{\eta}=\cup_{x\in U} B_{\eta(x)}(x)$. Since $U_{\eta}$ is compactly inside $\om$ we have that $G:=C\mathcal{X}_{U_{\eta}}  f$ is integrable.
%With that fact and the help of \eqref{gradx_rho} the gradient of $C(x)=\left (\int_{\om} \rho \left (\frac{x-y}{\eta(x)} \right )dy \right )^{-1}$ is computed
%\begin{align}
%\nabla C(x)
%&= C(x)^{2} \int_{\om} \nabla_{y} \left (\rho\left (\frac{x-y}{\eta(x)} \right ) \right )   \cdot \left (  I_{N\times N}- \frac{1}{\eta(x)} x^{\top}\nabla \eta(x) + \frac{1}{\eta(x)} y^{\top} \nabla \eta(x)\right )dy \nonumber\\
%&= C(x)^{2} \int_{\om} \nabla_{y} \left (\rho\left (\frac{x-y}{\eta(x)} \right ) \right )   \cdot \left (  \frac{1}{\eta(x)} y^{\top} \nabla \eta(x)\right )dy \nonumber\\
%&= -NC(x)^{2} \frac{\nabla \eta(x)}{\eta(x)} \int_{\om} \rho\left (\frac{x-y}{\eta(x)} \right )dy \nonumber\\
%&= -N C(x)  \frac{\nabla \eta(x)}{\eta(x)}.  \label{product_first}
%\end{align}
Thus, we can again alternate differentiation and integration. Then, using integration by parts, the product rule in $\bv$ and \eqref{gradx_rho}, we get
\begin{align}
 \nabla_{x}\left (\int_{\om} \rho\left (\frac{x-y}{\eta(x)} \right ) f(y)\,dy \right )
 &=\int_{\om} -\nabla_{y}\left (\rho\left (\frac{x-y}{\eta(x)} \right ) \right ) \left (I_{N\times N} + (y-x)^{\top} \frac{\nabla \eta (x)}{\eta(x)} \right ) f(y)\,dy\nonumber\\
 &= \int_{\om} \rho\left (\frac{x-y}{\eta(x)} \right )  \, dDf(y)\nonumber\\
 &\quad+\int_{\om} -\nabla_{y}\left (\rho\left (\frac{x-y}{\eta(x)} \right ) \right )\left ((y-x)^{\top} \frac{\nabla \eta (x)}{\eta(x)} f(y) \right )\,dy\nonumber\\
 &= \int_{\om} \rho\left (\frac{x-y}{\eta(x)} \right ) \, dDf(y)\nonumber\\
 &\quad+ \frac{\nabla \eta (x)}{\eta(x)}\int_{\om} -\nabla_{y}\left (\rho\left (\frac{x-y}{\eta(x)} \right ) \right )\left ((y-x)^{\top} f(y) \right )\,dy\nonumber\\
% &= \int_{\om} \rho\left (\frac{x-y}{\eta(x)} \right ) \, dDf(y)\nonumber\\
% & \quad+ \frac{\nabla \eta (x)}{\eta(x)}\int_{\om} \rho\left (\frac{x-y}{\eta(x)} \right )\left (Nf(y)+(y-x)^{\top}\nabla f(y) \right )dy\nonumber\\
 &= \int_{\om} \rho\left (\frac{x-y}{\eta(x)} \right ) \, dDf(y)\nonumber\\
 & \quad+N\frac{\nabla \eta (x)}{\eta(x)}\int_{\om} \rho\left (\frac{x-y}{\eta(x)}\right )f(y)\,dy\nonumber\\
 & \quad+ \frac{\nabla \eta (x)}{\eta(x)}\int_{\om} \rho\left (\frac{x-y}{\eta(x)}\right ) (y-x)^{\top}\cdot dDf(y)\label{product_second}
 \end{align}
 Combining now \eqref{product_rule}, \eqref{grad_C} and \eqref{product_second} we get
\begin{align*}
\nabla Tf(x)
&= \nabla C(x) \int_{\om} \rho\left (\frac{x-y}{\eta(x)} \right ) f(y)\,dy+ C(x) \nabla_{x}\left (\int_{\om} \rho\left (\frac{x-y}{\eta(x)} \right ) f(y)\,dy \right )\\
&=-N C(x)  \frac{\nabla \eta(x)}{\eta(x)} \int_{\om} \rho\left (\frac{x-y}{\eta(x)} \right ) f(y)\,dy\\
&\quad  + C(x)\int_{\om} \rho\left (\frac{x-y}{\eta(x)} \right ) dDf(y)\nonumber\\
&\quad +NC(x)\frac{\nabla \eta (x)}{\eta(x)}\int_{\om} \rho\left (\frac{x-y}{\eta(x)}\right )f(y)\,dy\\
&\quad + C(x) \frac{\nabla \eta (x)}{\eta(x)}\int_{\om} \rho\left (\frac{x-y}{\eta(x)}\right ) (y-x)^{\top}\cdot dDf(y)\\
&= C(x)\int_{\om} \rho\left (\frac{x-y}{\eta(x)} \right ) dDf(y)+   C(x) \frac{\nabla \eta (x)}{\eta(x)}\int_{\om} \rho\left (\frac{x-y}{\eta(x)}\right ) (y-x)^{\top}\cdot dDf(y).
\end{align*}

\end{proof}

\bibliographystyle{amsalpha}
\bibliography{kostasbib}

\providecommand{\bysame}{\leavevmode\hbox to3em{\hrulefill}\thinspace}
\providecommand{\MR}{\relax\ifhmode\unskip\space\fi MR }
% \MRhref is called by the amsart/book/proc definition of \MR.
\providecommand{\MRhref}[2]{%
  \href{http://www.ams.org/mathscinet-getitem?mr=#1}{#2}
}
\providecommand{\href}[2]{#2}
\begin{thebibliography}{CMMR14}

\bibitem[ABM14]{attouch2014variational}
H.~Attouch, G.~Buttazzo, and G.~Michaille, \emph{Variational analysis in
  {S}obolev and {BV} spaces: Applications to {PDE}s and optimization}, vol.~17,
  SIAM, 2014.

\bibitem[AF03]{adams2003sobolev}
R.A. Adams and J.J.F. Fournier, \emph{Sobolev spaces}, vol. 140, Elsevier,
  2003.

\bibitem[AFP00]{AmbrosioBV}
L.~Ambrosio, N.~Fusco, and D.~Pallara, \emph{{Functions of bounded variation
  and free discontinuity problems}}, Oxford University Press, USA, 2000.

\bibitem[AR18]{Antil_2018}
H.~Antil and C.N. Rautenberg, \emph{Fractional elliptic quasi-variational
  inequalities: Theory and numerics}, Interfaces and Free Boundaries
  \textbf{20} (2018), no.~1, 1--24, \url{http://dx.doi.org/10.4171/IFB/395}.

\bibitem[AR19]{Antil_2019}
\bysame, \emph{Sobolev spaces with non-{M}uckenhoupt weights, fractional
  elliptic operators, and applications}, SIAM Journal on Mathematical Analysis
  \textbf{51} (2019), no.~3, 2479--2503,
  \url{http://dx.doi.org/10.1137/18M1224970}.

\bibitem[Bur82]{MR684996}
V.I. Burenkov, \emph{Mollifying operators with variable step and their
  application to approximation by infinitely differentiable functions},
  Nonlinear analysis, function spaces and applications, {V}ol. 2 ({P}\'\i sek,
  1982), Teubner-Texte zur Math., vol.~49, Teubner, Leipzig, 1982,
  \url{http://www.ams.org/mathscinet-getitem?mr=684996}, pp.~5--37.

\bibitem[Bur98]{MR1622690}
\bysame, \emph{Sobolev spaces on domains}, Teubner-Texte zur Mathematik
  [Teubner Texts in Mathematics], vol. 137, B. G. Teubner Verlagsgesellschaft
  mbH, Stuttgart, 1998, \url{http://dx.doi.org/10.1007/978-3-663-11374-4}.

\bibitem[CMMR14]{mazya}
A.~Carbery, V.~Maz'ya, M.~Mitrea, and D.J. Rule, \emph{The integrability of
  negative powers of the solution to the {S}aint {V}enant problem}, Annali
  della Scuola Normale Superiore \textbf{XIII} (2014), no.~2, 465--531,
  \url{http://dx.doi.org/10.2422/2036-2145.201106_006}.

\bibitem[CR18]{Ceretani_2018}
A.N. Ceretani and C.N. Rautenberg, \emph{The {B}oussinesq system with mixed
  non-smooth boundary conditions and do-nothing boundary flow}, Zeitschrift
  f{\"u}r angewandte Mathematik und Physik \textbf{70} (2018), no.~1,
  \url{http://dx.doi.org/10.1007/s00033-018-1058-y}.

\bibitem[CZ52]{Calderon:1952nx}
A.P. Calderon and A.~Zygmund, \emph{On the existence of certain singular
  integrals}, Acta Mathematica \textbf{88} (1952), no.~1, 85,
  \url{https://doi.org/10.1007/BF02392130}.

\bibitem[DiB02]{dibenedetto2002real}
E.~DiBenedetto, \emph{Real analysis}, Springer, 2002.

\bibitem[HHP18]{hintermuller2018function}
M.~Hinterm{\"u}ller, M.~Holler, and K.~Papafitsoros, \emph{A function space
  framework for structural total variation regularization with applications in
  inverse problems}, Inverse Problems \textbf{34} (2018), no.~6, 064002,
  \url{https://doi.org/10.1088/1361-6420/aab586}.

\bibitem[HK06]{hintermuller2006path}
M.~Hinterm{\"u}ller and K.~Kunisch, \emph{Path-following methods for a class of
  constrained minimization problems in function space}, SIAM Journal on
  Optimization \textbf{17} (2006), no.~1, 159--187,
  \url{https://doi.org/10.1137/040611598}.

\bibitem[HPR17]{hintermuller2017analytical}
M.~Hinterm{\"u}ller, K.~Papafitsoros, and C.N. Rautenberg, \emph{Analytical
  aspects of spatially adapted total variation regularisation}, Journal of
  Mathematical Analysis and Applications \textbf{454} (2017), no.~2, 891--935,
  \url{https://doi.org/10.1016/j.jmaa.2017.05.025}.

\bibitem[HR15]{MR3306389}
M.~Hinterm{{\"u}}ller and C.N. Rautenberg, \emph{On the density of classes of
  closed convex sets with pointwise constraints in {S}obolev spaces}, Journal
  of Mathematical Analysis and Applications \textbf{426} (2015), no.~1,
  585--593, \url{http://dx.doi.org/10.1016/j.jmaa.2015.01.060}.

\bibitem[HR16]{Simon2}
M.~Hinterm{{\"u}}ller and S.~R{{\"o}}sel, \emph{Duality results and
  regularization schemes for {P}randtl-{R}euss perfect plasticity}, WIAS
  Preprint No. 2376 (2016),
  \url{http://www.wias-berlin.de/preprint/2376/wias_preprints_2376.pdf}.

\bibitem[HR17]{hint_rau}
M.~Hinterm{\"u}ller and C.N. Rautenberg, \emph{Optimal selection of the
  regularization function in a weighted total variation model. {P}art {I}:
  {M}odelling and theory}, Journal of Mathematical Imaging and Vision
  \textbf{59} (2017), no.~3, 498--514,
  \url{https://doi.org/10.1007/s10851-017-0744-2}.

\bibitem[HRR17]{Simon1}
M.~Hinterm{\"u}ller, C.~N. Rautenberg, and S.~R{\"o}sel, \emph{Density of
  convex intersections and applications}, Proceedings of the Royal Society of
  London A: Mathematical, Physical and Engineering Sciences \textbf{473}
  (2017), no.~2205, 20160919, \url{https://doi.org/10.1098/rspa.2016.0919}.

\bibitem[HRWL17]{hintermuller2017optimal}
M.~Hinterm{\"u}ller, C.N. Rautenberg, T.~Wu, and A.~Langer, \emph{Optimal
  selection of the regularization function in a weighted total variation model.
  {Part II}: Algorithm, its analysis and numerical tests}, Journal of
  Mathematical Imaging and Vision \textbf{59} (2017), no.~3, 515--533,
  \url{https://doi.org/10.1007/s10851-017-0736-2}.

\bibitem[Whi34]{whitney1934analytic}
H.~Whitney, \emph{Analytic extensions of differentiable functions defined in
  closed sets}, Transactions of the American Mathematical Society \textbf{36}
  (1934), no.~1, 63--89, \url{http://www.jstor.org/stable/1989708}.

\end{thebibliography}
\end{document}